\documentclass[a4paper,12pt]{amsart}%dvipdfmx,
\RequirePackage{plautopatch}
\RequirePackage[l2tabu, orthodox]{nag}
\usepackage{bxtexlogo}
\usepackage{amsfonts}
\usepackage{bbm}
\usepackage{amssymb, amsmath,stmaryrd,mathrsfs,mathtools}
\usepackage{tikz,float}
\usepackage{tikz-cd}
\usepackage{amsthm}
\usepackage{enumerate}
\usepackage{lmodern}

\usetikzlibrary{calc}
\usepackage{url}
\usepackage[hidelinks]{hyperref}
\usepackage{graphicx}
\usepackage {diagbox}
\usepackage{cleveref}
\newtheorem{thm}{Theorem}

\newtheorem{lem}[thm]{Lemma}

\newtheorem{ques}{Question}

\newcounter{enuAlph}

\newtheorem{teorema}[enuAlph]{Theorem}

\theoremstyle{definition}

\newtheorem{dfn}[thm]{Definition}
\newtheorem{rem}[thm]{Remark}

\newtheorem*{acknowledgements}{Acknowledgements}

\numberwithin{thm}{section} 
\numberwithin{equation}{section}

\usepackage{etoolbox}

\renewcommand{\a}{\alpha}

\newcommand{\p}{\mathbb{P}}

\newcommand{\qd}{\dot{\mathbb{Q}}}

\DeclareMathAlphabet{\mymathbb}{U}{BOONDOX-ds}{m}{n}
\newcommand{\zero}{\mymathbb{0}}

\newcommand{\oo}{{\omega}^\omega}
\newcommand{\ooo}{[{\omega}]^\omega}

\newcommand{\fsq}{\omega^{<\omega}}
\newcommand{\sq}{2^{<\omega}}

\newcommand{\on}{\mathpunct{\upharpoonright}}

\newcommand{\bb}{\mathfrak{b}}

\newcommand{\dd}{\mathfrak{d}}
\newcommand{\ee}{\mathfrak{e}}

\newcommand{\mm}{\mathfrak{m}}

\newcommand{\spl}{\mathfrak{s}}

\DeclareMathOperator{\non}{non}

\DeclareMathOperator{\add}{add}

\newcommand{\nonm}{\non(\mathcal{M})}

\newcommand{\addn}{\add(\mathcal{N})}

\newcommand{\R}{\mathbf{R}}

\DeclareMathOperator{\dom}{dom}
\DeclareMathOperator{\ran}{ran}

\DeclareMathOperator{\cf}{cf}

\newcommand{\prp}{\mathbb{PR}}

\newcommand{\str}{\mathrm{Str}}
\newcommand{\rp}{\sqsubset^\mathrm{r}}
\newcommand{\rsp}{\sqsubset^\mathrm{s}}
\newcommand{\rs}{\R^*}
\newcommand{\rss}{\R^{**}}

\newcommand{\tri}{\triangleleft_{*}}
\newcommand{\trii}{\triangleleft_{**}}

\newcommand{\spst}{\mathfrak{s}_{\mathrm{game}^*}^\mathrm{I}}
\newcommand{\spstt}{\mathfrak{s}_{\mathrm{game}^{**}}^\mathrm{I}}

\newcommand{\zp}{\sqsubset^{\mathrm{p}}_\zero}
\newcommand{\pdtd}{\sqsubset^{\mathrm{p}}}

\makeatletter
\renewcommand{\p@enumi}{}
\makeatother

\newcommand{\eeg}{\ee^\mathrm{G}}

\DeclareMathOperator{\Pred}{Pred}

\newcommand{\rel}{\sqsubset}

\newcommand{\scrA}{\mathcal{A}}
\newcommand{\Pow}{\mathcal{P}}
\newcommand{\seq}[1]{{\langle#1\rangle}}
\newcommand{\sIstar}{\mathfrak{s}_\mathrm{game^\ast}^{\mathrm{I}}}
\newcommand{\sIwstar}{\mathfrak{s}_{\mathrm{game^{\ast\ast}}}^\mathrm{I}}
\title{Evasion numbers via zero-prediction}
\author{Takashi Yamazoe}
\address{Graduate School of System Informatics, Kobe University,
	Rokko--dai 1--1, Nada--ku, 657--8501 Kobe, Japan}
\email{yamazoe@people.kobe-u.ac.jp}
\begin{document}
	\begin{abstract}
		%(Tentative) %In \cite{CGHY26}, 
		Cruz Chapital, Goto, Hayashi and the author showed that the game-theoretic variants $\spst$ and $\spstt$ of the splitting number $\spl$ are consistently different, although the corresponding two games differ only in a minor case.
		This result suggests that even if two relational systems $\R=\langle X,Y,\rel\rangle$, $\R^\prime=\langle X,Y,\rel^\prime\rangle$ are the same modulo a countable set $C\subseteq X$, 
		%their bounding numbers $\bb(\R)$ and $\bb(\R^\prime)$ can be separated.
		the associated cardinal invariants might be different.
		We study this phenomenon for the standard relational system of evasion and prediction and for a variation of it. We show that such a difference occurs for the standard one, but not for the variation.
	\end{abstract}
	\maketitle
	
	\section{Introduction}
	In \cite{CGHY26}, Cruz Chapital, Goto, Hayashi and the author introduced six game-theoretic variants of the splitting number $\spl$. We deal with two of them, $\spst$ and $\spstt$, defined as follows: 
	
	We define two kinds of games of length $\omega$ played by Player I and Player II.
	Fix a set $\scrA \subseteq \Pow(\omega)$.
	We call the game indicated by the following Table \ref{table:splittingstargame} the \textit{splitting* game} with respect to $\scrA$.
	
	\begin{table}[H]
		\caption{The splitting* game}
		\centering
		\begin{tabular}{l|llllll}\label{table:splittingstargame}
			Player I  & $i_0$ &       & $i_1$ &       & $\dots$ &     \\  \hline
			Player II &       & $j_0$ &       & $j_1$ &         & $\dots$
		\end{tabular}
	\end{table}
	
	Here, $\seq{i_k : k  \in \omega}$ and $\seq{j_k : k  \in \omega}$ are in $2^\omega$. %sequences of elements in $2$. %$\{0,1\}$.
	Player II wins when either Player I did not play $1$ infinitely often or
	\begin{align*}
		\{ k \in \omega : j_k = 1 \} \in \scrA \text{ and  splits } \{ k \in \omega : i_k = 1\}. 
	\end{align*}
	
	In the splitting* game, when both $\{ k \in \omega : j_k = 1 \}$ and $\{ k \in \omega : i_k = 1 \}$ are finite, 
	the winner is defined as Player II, which is not necessarily obvious. Hence, we just define another game by switching the winner in this case. Namely,
	the \textit{splitting** game} with respect to $\scrA$ follows the same rule as the splitting* game,
	but only the winning condition of Player II is replaced by:
	Player II wins when \underline{either} Player I did not play $1$ infinitely often \textit{and Player II did}, \underline{or}
	\begin{align*}
		\{ k \in \omega : j_k = 1 \} \in \scrA \text{ and  splits } \{ k \in \omega : i_k = 1\}. 
	\end{align*}
	Let:
	\begin{align*}
		\sIstar &= \min \{ |\scrA| : \text{Player I has no winning strategy} \\
		& \hspace{15ex} \text{for the splitting* game with respect to } \scrA \}, \text{ and} \\
		\sIwstar &= \min \{ |\scrA| : \text{Player I has no winning strategy} \\
		& \hspace{15ex} \text{for the splitting** game with respect to } \scrA \}.
	\end{align*}
	Thus, the splitting* game and the splitting** game are basically the same: the only difference appears when both players play finite sets, in which case the winner is Player II in the splitting* game but is Player I in the splitting** game. However, the cardinal invariants of the two games are different:
	
	\begin{thm}[{\cite[Theorem 3.17]{CGHY26}}]
		$\sIstar<\sIwstar$ is consistent.
	\end{thm}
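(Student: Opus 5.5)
The approach is to separate the two cardinals by ZFC bounds that point in opposite directions, and then pick a model where the bounds diverge. I plan an upper bound $\sIstar\le\nonm$ and a lower bound for $\sIwstar$ by a cardinal that can be made large while $\nonm$ stays $\aleph_1$. The natural model is the Cohen model, obtained by adding $\aleph_2$ Cohen reals to a model of CH. There $\nonm=\aleph_1$, so the goal reduces to showing $\sIwstar\ge\aleph_2=\mathrm{cov}(\mathcal{M})$ in that model.

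\emph{Step 1: $\sIstar\le\nonm$.} Fix a nonmeager $\scrA\subseteq\Pow(\omega)$ and a strategy $\sigma$ for Player I. Each play of II is identified with a real $x\in2^\omega$, and $\sigma$ then determines I's set $I_\sigma(x)$ continuously. I will show that the set
\[
L_\sigma=\{x\in2^\omega : I_\sigma(x)\text{ is infinite and }x\text{ does not split }I_\sigma(x)\}
\]
is meager, by a Banach--Mazur type density argument. For a finite partial play $s$ and a parity requirement, run $\sigma$ against extensions of $s$ until I next plays a $1$, and let II answer $1$ or $0$ at that stage as needed. If I never plays another $1$, then $I_\sigma(x)$ is finite on the whole cone, and II wins the splitting* game there by definition. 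This is exactly where the * convention is used. Hence $L_\sigma$ is covered by countably many nowhere dense sets, so some $x\in\scrA\setminus L_\sigma$ exists and is a winning answer for II. Therefore $\sigma$ is not winning.

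\emph{Step 2: lower bound for $\sIwstar$.} In the splitting** game, if II's set is finite, Player I can win by also playing only finitely many $1$s. So a strategy for I must balance two demands. It must stop playing $1$s whenever II appears to stop, and whenever II's set $B$ is infinite it must produce an infinite set that is almost contained in $B$ or almost disjoint from it. Player I commits $i_k$ before seeing $j_k$, so this is a prediction task: at chosen stages, I guesses that $j_k=0$ (a ``zero-prediction'') and plays $1$ only at such stages. I plan to phrase this as a relational system of evasion/prediction type, in which a strategy corresponds to a predictor that guesses zeros of $B$ infinitely often and never errs from some point on, provided $B$ is infinite and co-infinite in a suitable sense. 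I then show that $\sIwstar$ is at least the corresponding bounding-type (evasion) number.

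\emph{Step 3: the model.} I next show that a single Cohen real over a model containing $\scrA$, with $|\scrA|<\aleph_2$, yields a predictor of the kind in Step 2 that succeeds against every $B\in\scrA$. Bookkeeping then gives $\sIwstar=\aleph_2$ in the Cohen model, while Step 1 gives $\sIstar\le\nonm=\aleph_1$.

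The main obstacle is Step 2 and its verification in Step 3. A naive Cohen guess for ``$k\notin B$'' is wrong infinitely often, and then $B$ splits I's set. The predictor must therefore be adaptive. My first attempt is this: I plays $1$ only after II has played a block of $1$s whose length is dictated by the Cohen real, and I uses the pattern of II's recent moves to choose a stage at which a $0$ is forced modulo finitely many errors. If that fails, I would instead bound $\sIwstar$ below by a zero-prediction evasion number and compute that number in a different model, for example by a finite support iteration of a suitable predictor-adding forcing, where $\nonm$ can still be kept at $\aleph_1$.
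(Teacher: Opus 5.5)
\paragraph{A fatal gap.} Your Step~1 is correct, but it proves too much, and this sinks the plan. The splitting* and splitting** games can have different winners only when II's play $x$ lies in $\zero$, which is a countable set. On a cone where I never plays another $1$, II therefore still wins the splitting** game at every $x\notin\zero$. So for a fixed $\sigma$, the plays $x\in\scrA$ that fail to beat $\sigma$ in the splitting** game all lie in $L_\sigma\cup\zero$, which is still meager. Your density argument never really uses the * convention, and it gives $\sIwstar\le\nonm$ in ZFC as well.

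Consequently, in the Cohen model $\sIstar\le\sIwstar\le\nonm=\aleph_1$, and the goal of Steps~2--3 ($\sIwstar\ge\aleph_2$ there) is false. Concretely, Step~3 cannot work: the first $\aleph_1$ Cohen reals form a nonmeager set in the Cohen model. So no strategy at all, let alone one read off from a Cohen real, wins the splitting** game against all of them.

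Your fallback fails for the same reason. Combine $\sIwstar\le\nonm$ with the ZFC inequality $\min\{\sIwstar,\bb\}\le\sIstar$ quoted in the paper. Then any model of $\sIstar<\sIwstar$ satisfies $\bb\le\sIstar<\sIwstar\le\nonm$, so $\nonm\ge\aleph_2$ there.

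Step~2 also misplaces the asymmetry. In the splitting** game, I wins automatically against eventually-zero plays, so I never needs to ``stop''. The extra burden lies in the splitting* game, where I must keep playing $1$s even against $x\in\zero$.

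\paragraph{The missing idea.} You need to keep $\sIstar$ small by an argument that sees the countable set $\zero$, not by a category bound. This is the Fr-limit mechanism that Section~3 of the paper uses for $\ee_\zero<\ee$, adapted to strategies.

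Take a finite support iteration of Fr-linked posets whose first $\kappa$ iterands are Cohen. Suppose a name $\sigma$ is forced to win the splitting* game against $\kappa$ of these Cohen reals $c_\alpha$. Thin out to a uniform $\Delta$-system with a common Cohen stem $s$, a common threshold $n^*=|s|$, and a common side $j\in2$, such that $\sigma(c_\alpha\on k)=1$ implies $c_\alpha(k)=j$ for all $k\ge n^*$. Extend the Cohen coordinates by $m$ zeros if $j=1$, or by $m$ ones if $j=0$, and take the Fr-limit. This forces that $\sigma$ plays no $1$ after $n^*$ against $s^\frown000\cdots$ (respectively $s^\frown111\cdots$). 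Such a play is a loss for I in the splitting* game, which is a contradiction.

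A play in $\zero$ is an automatic win for I in the splitting** game, so the case $j=1$ yields no contradiction there. That is why one can, in the same iteration, cofinally iterate a $\sigma$-Fr-linked forcing analogous to $\prp$. This forcing adds a strategy that plays $1$ only where II does and wins the splitting** game against all earlier reals. Doing so pushes $\sIwstar$ (and $\nonm$) up to some $\lambda>\kappa$.
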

	
	To analyze this separation result in more detail, %and to find a similar example, 
	recall the concept of relational systems:  
	\begin{dfn}
		\begin{itemize}
			\item $\R=\langle X,Y,\rel\rangle$ is a \textit{relational system} if $X$ and $Y$ are non-empty sets and $\rel \subseteq X\times Y$.
			%$\R$ is a \textit{relational system on the reals} if $X$ and $Y$ are sets of reals.
			\item We call an element of $X$ a \textit{challenge} and an element of $Y$ a \textit{response}, and say that $x$ \textit{is met by} $y$ when $x\rel y$.
			\item $F\subseteq X$ is \textit{$\R$-unbounded} if no response meets all challenges in $F$.
			\item $F\subseteq Y$ is \textit{$\R$-dominating} if every challenge is met by some response in $F$.
			\item $\R$ is non-trivial if $X$ is $\R$-unbounded and $Y$ is $\R$-dominating. For non-trivial $\R$, define
			\begin{itemize}
				\item $\bb(\R)\coloneq\min\{|F|:F\subseteq X \text{ is }\R\text{-unbounded}\}$, and
				\item $\dd(\R)\coloneq\min\{|F|:F\subseteq Y \text{ is }\R\text{-dominating}\}$.
			\end{itemize}

		\end{itemize}
		
	\end{dfn}
	
	Let us reformulate $\sIstar$ and $\sIwstar$ in the framework of relational systems.
	\begin{dfn}
		Let $x,y\in 2^\omega$.
		\begin{enumerate}
			\item $\zero$ denotes the set of all sequences $z\in2^\omega$ such that $z(n)=0$ for almost all $n$.
			\item $x\rsp y$ if $x(n)=y(n)=1$ and $x(m)=1$ and $y(m)=0$ for infinitely many $n,m<\omega$, i.e., $x^{-1}(\{1\})$ and $y^{-1}(\{1\})$ are infinite and  $x^{-1}(\{1\})$ is split by $y^{-1}(\{1\})$. $y\rp x$ if $\lnot(x\rsp y)$. Note that $y\rp x$ holds whenever $y\in\zero$.  
			
			%		\item Let $j\in2$ and $n<\omega$.
			%		$y\rp_{j,n} x$ if for all $m\geq n$, $x(m)=0$ or $y(m)=1-j$ holds. 
			%		Note $\rp=\bigcup_{j\in2}\bigcup_{n<\omega}\rp_{j,n}$.
			
			\item $y\tri x$ if $x\in 2^\omega\setminus\zero$ and $y\rp x$, i.e., I wins with the play $x$ against the play $y$ of II in the splitting* game. $y\trii x$ if either $x\in 2^\omega\setminus\zero$ and $y\rp x$ or $y\in\zero$, i.e., I wins with the play $x$ against the play $y$ of II in the splitting** game.
			
			\item $\str$ denotes the set of all I's strategies,
			namely, $\str\coloneq2^{(2^{<\omega})}$. For $\sigma\in\str$, $\sigma*y$ denotes the play of I according to the strategy $\sigma$ and the play $y$ of II, namely, $\sigma*y(n)\coloneq\sigma(y\on n)$ for $n<\omega$. 
			%$y\rp \sigma$ denotes $y\rp \sigma*y$ and analogously for $\tri$ and $\trii$.
			$y\tri \sigma$ denotes $y\tri \sigma*y$ and analogously for  $\trii$.
			\item Define relational systems $\rs$ and $\rss$ by $\rs\coloneq\langle 2^\omega,\str,\tri\rangle$ and $\rss\coloneq\langle 2^\omega,\str,\trii\rangle$. 
			
		\end{enumerate}
		Thus, $\sIstar=\bb(\rs)$ and $\sIwstar=\bb(\rss)$ hold.
	\end{dfn}
	
	%Therefore, $y\trii x$ if and only if either $y\tri x$ or $y\in\zero$.

	The two relational systems $\rs$ and $\rss$ behave the same way in most cases: for any $y\in2^\omega$ and $\sigma\in\str$, \textit{whenever $y\notin\zero$,} $y\tri \sigma$ if and only if $y\trii \sigma$.
	Nevertheless, their bounding numbers $\sIstar$ and $\sIwstar$ might be consistently different.
	
	%\[y\trii x\iff
	%\begin{cases}
	%	y\tri x, \text{ or }\\
	%	y\in \zero.
	%\end{cases}
	%\]
	
	%\[y\tri x\iff
	%\begin{cases}
	%	x\notin\zero \text{ and }y\trii x &\text{ if }y\notin\zero\\
	%	x\notin\zero &\text{ if }y\in\zero.
	%\end{cases}
	%\]
	
	%Therefore, this separation result suggests the following:
	%For two relational systems $\R=\langle X,Y,\rel\rangle$, $\R^\prime=\langle X,Y,\rel^\prime\rangle$ on the reals, 
	%even if $\rel = \rel^\prime$ on $(X\setminus C)\times Y$ for some countable $C\subseteq X$, 
	%$\bb(\R)$ and $\bb(\R^\prime)$ might be different.
	
	%Thus, $\rs$ and $\rss$ behave the same way modulo the countable set $\zero$. 
	
	%We explore this phenomenon for other relational systems.
	We investigate whether a similar phenomenon can occur for other relational systems.
	More precisely, we study how the bounding number of $\R=\langle X,Y,\rel\rangle$ is affected by changing the relation $x\rel y$ only when $x$ belongs to some fixed countable set $C\subseteq X$.

	As a test case, we focus on the evasion number $\ee$ since the two relational systems $\rs$ and $\rss$ are similar to the combinatorial concept of evasion and prediction, in that both a strategy and a predictor decide their value by seeing the previous values of a function. % in the sense that a strategy corresponds to a predictor and a play of Player II corresponds to a function being predicted. 
	In what follows, we naturally extend the notation $\zero$ to $\oo$, so that $\zero$ denotes the set of all eventually zero functions in $\oo$.
	
	For several relational systems $\R=\langle X,Y,\rel\rangle$ associated with evasion and prediction, 
	we define the $\zero$-version $\R_\zero$ by switching the prediction relation $x\rel y$ only when $x\in\zero$. We compare the bounding numbers $\bb(\R)$ and $\bb(\R)_\zero\coloneqq\bb(\R_\zero)$.
	
	On the one hand, we show that this ``$\zero$-operation'' %modification %$\zero$-operation 
	changes the standard evasion number $\ee$:
	
	\begin{teorema}[\Cref{thm:Con_eezero<ee}]
		$\ee_\zero<\ee$ is consistent.
	\end{teorema}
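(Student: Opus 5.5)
The plan is to bound $\ee_\zero$ from above by the unbounding number $\bb$, and then invoke a known model of $\bb<\ee$. For orientation, I take the $\zero$-switched prediction relation to mean the following. A predictor $\pi=(D,\seq{\pi_n:n\in D})$ $\zero$-meets a function $f$ if either $f\notin\zero$ and $\pi$ predicts $f$ (i.e.\ $\pi_n(f\on n)=f(n)$ for almost all $n\in D$), or $f\in\zero$ and $\pi$ does \emph{not} predict $f$. So if the family $F$ we build contains all eventually zero $0$-$1$ sequences, a single $\zero$-meeting predictor must fail on each of them. Since $\zero\cap 2^\omega$ is countable, adding it to a family does not change the relevant cardinalities.

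\textbf{Step 1 (extracting a function from $\pi$).} Fix $\pi$ that $\zero$-meets every $z\in\zero\cap2^\omega$. For each $s\in 2^{<\omega}$, $\pi$ must mispredict $s^\frown 0^\infty$ at infinitely many $n\in D$. Such a misprediction means $\pi_n(s^\frown 0^{n-|s|})\neq 0$. Define $h_\pi(k)$ as the least $m$ such that for every $s\in 2^{k}$ there is some $n\in D$ with $k\le n<m$ and $\pi_n(s^\frown0^{n-k})\ne 0$. This is finite because $2^k$ is finite, and this is exactly why I restrict the test functions to $2^\omega$. Without loss of generality $h_\pi$ is increasing.

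\textbf{Step 2 (coding an unbounded family).} Given an unbounded family $\{g_\alpha:\alpha<\bb\}$ of increasing functions, I will define $f_\alpha\in2^\omega\setminus\zero$ by iterating $g_\alpha$. Put $k_0=0$ and $k_{i+1}=g_\alpha(k_i)+1$, and let $f_\alpha$ take the value $1$ exactly at the points $k_{i+1}-1$, with zeros in between. Let $F=\{f_\alpha\}\cup(\zero\cap2^\omega)$; then $|F|\le\bb$.

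Suppose some $\pi$ $\zero$-meets all of $F$. I would choose $\alpha$ such that $g_\alpha(k)>h_\pi(k)$ for infinitely many \emph{block starts} $k=k_i$ of $f_\alpha$. For such an $i$, the zero stretch of $f_\alpha$ beginning at $k_i$ has length at least $g_\alpha(k_i)-k_i\ge h_\pi(k_i)-k_i$. By the definition of $h_\pi$ applied to $s=f_\alpha\on k_i$, there is then some $n\in D$ inside that stretch with $\pi_n(f_\alpha\on n)\ne0=f_\alpha(n)$. Infinitely many such mispredictions contradict that $\pi$ predicts $f_\alpha$. Hence $F$ is $\zero$-unbounded, and $\ee_\zero\le\bb$.

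\textbf{Main obstacle.} The delicate point is precisely the choice of $\alpha$ in Step 2. Unboundedness of $\{g_\alpha\}$ only gives infinitely many $k$ with $g_\alpha(k)>h_\pi(k)$, not necessarily at the block points $k_i$ of $f_\alpha$. I would handle this in one of two ways. The first is the standard interval-partition characterization of $\bb$: use an unbounded family of interval partitions, so that every partition induced by $h_\pi$ fails to dominate some member. Then infinitely many intervals of the chosen partition contain a whole $h_\pi$-window, and I place the $1$'s of $f_\alpha$ at interval endpoints. The second is to replace $g_\alpha$ by $\max(g_\alpha,\mathrm{id})$ iterated, using monotonicity of $h_\pi$. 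If the witnessing stretch begins inside a window rather than at its start, monotonicity still lets me pass to a start point.

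\textbf{Step 3.} Finally, cite the known consistency of $\bb<\ee$ (e.g.\ Brendle's model for evasion and prediction) to conclude $\ee_\zero\le\bb<\ee$ there.
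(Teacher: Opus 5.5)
Your proof is correct, and it takes a genuinely different route from the paper's.

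\textbf{The two routes.} The paper never bounds $\ee_\zero$ in ZFC. It builds the separating model directly: a ${<}\kappa$-Fr-iteration that starts with $\kappa$ Cohen reals and continues with $\prp$ and small subposets of $\prp_\zero$. In Lemma \ref{lem_ez_small} it takes Fr-limits of conditions whose Cohen coordinate is extended by $s^\frown\zero_m$, to show that the first $\kappa$ Cohen reals stay $\zero$-unbounded. The value $\ee=\lambda$ comes from bookkeeping and $\ee\le\nonm$.

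You instead extract the combinatorial core in ZFC. A predictor that mispredicts every $s^\frown0^\infty$ with $s\in\sq$ yields a finite bound $h_\pi$ on how long a zero block after a $0$--$1$ prefix of length $k$ can survive. A $\bb$-sized family of $0$--$1$ sequences which, for every $h$, has infinitely many zero blocks $[k,h(k))$ defeats all such $\pi$. Hence $\ee_\zero\le\bb$, and the consistency of $\bb<\ee$ finishes the proof. That consistency is due to Brendle and Shelah (Evasion and prediction II, the paper's source for Lemma \ref{lem_prp_is_Frlinked}), not to Brendle alone.

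\textbf{What each buys.} Combined with Lemma \ref{lem:ezero_min_eb}, your route gives $\ee_\zero=\min\{\ee,\bb\}$ in ZFC. This answers the paper's Question 1 positively. It also shows that every model of $\bb<\ee$ satisfies $\ee_\zero=\bb<\ee$. This is consistent with the paper, whose model satisfies $\bb\le\kappa$ by Lemma \ref{lem:ezero_min_eb}. The paper's route, in exchange, pins down $\ee_\zero=\kappa$ and $\ee=\lambda$ for arbitrary regular $\kappa\le\lambda$ in a single construction. Your argument only transfers whatever values of $\bb<\ee$ one already has a model for.

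\textbf{The step you flag.} Your first fix works, but make the partition explicit. $P^\alpha\not\le^*J$ only gives infinitely many $J_m$ containing at most one left endpoint of $P^\alpha$. So let each $J_m=[j_m,j_{m+1})$ consist of two windows, with $j'_m=h_\pi(j_m)+1$ and $j_{m+1}=h_\pi(j'_m)+1$. One half then contains no left endpoint, so it lies inside a single interval $[k_i,k_{i+1})$ of $P^\alpha$. With the $1$'s placed at the points $k_{i+1}-1$, this gives a zero block $[a,h_\pi(a))$.

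Your second fix fails as literally stated. For $n\in[k_i,k_{i+1})$, monotonicity gives $h_\pi(k_{i+1})\ge h_\pi(n)$, which is the wrong direction. It does work if you choose $\alpha$ with $g_\alpha(n)>h_\pi(h_\pi(n))$ infinitely often. Either $h_\pi(k_i)\le k_{i+1}-1$, and the block at $k_i$ works. Or $k_{i+1}\le h_\pi(k_i)\le h_\pi(n)$, and then $h_\pi(k_{i+1})\le h_\pi(h_\pi(n))<g_\alpha(n)<g_\alpha(k_{i+1})=k_{i+2}-1$, so the block at $k_{i+1}$ works.
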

	
	On the other hand, we find a variation $\eeg_k$ of the evasion number $\ee$, which is not changed by the $\zero$-operation:
	\begin{teorema}[\Cref{thm:eegk_zero}]
		For any $k\geq1$, $(\eeg_k)_\zero=\eeg_k$.
	\end{teorema}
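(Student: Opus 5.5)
The plan is to prove the two inequalities $\eeg_k\le(\eeg_k)_\zero$ and $(\eeg_k)_\zero\le\eeg_k$ separately. Two tools would be used. The first is a translation of challenges that moves everything off the countable set $\zero$. The second is a merging argument that handles the countably many challenges in $\zero$ all at once. Throughout, write $\R$ for the relational system whose bounding number is $\eeg_k$ and $\R_\zero$ for its $\zero$-version. By construction the two relations agree on $(\oo\setminus\zero)\times Y$.

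\emph{Step 1: removing $\zero$ by a shift.} Let $\varphi\colon\oo\to\oo$ be the pointwise shift $\varphi(x)(n)=x(n)+1$, so that $\ran\varphi\cap\zero=\emptyset$. I would first check that $\varphi$ is a Tukey-type self-embedding of $\R$, i.e., that it preserves and reflects being met, up to a matching change of responses.
\begin{itemize}
\item Given a response (predictor/strategy) $\pi$, define $\pi'$ on the image by $\pi'(s)=\pi(s-1)+1$. Then $x$ is met by $\pi$ iff $\varphi(x)$ is met by $\pi'$.
\item Conversely, given $\sigma$, define $\pi(s)=\sigma(\varphi(s))-1$, truncated at $0$. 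Then $\varphi(x)$ met by $\sigma$ implies $x$ met by $\pi$.
\end{itemize}
It follows that $\bb(\R)=\bb(\R\restriction(\oo\setminus\zero))$. Because the relations of $\R$ and $\R_\zero$ agree off $\zero$, this common value is also $\bb(\R_\zero\restriction(\oo\setminus\zero))$. In particular, any $\R$-unbounded family $F$ yields the $\R_\zero$-unbounded family $\varphi[F]$ of the same size, so $(\eeg_k)_\zero\le\eeg_k$. The only thing to verify is that the game-style prediction with $k$ guesses commutes with the shift, which should be routine.

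\emph{Step 2: a countable set costs nothing.} For the reverse inequality, let $F$ be $\R_\zero$-unbounded with $|F|<\eeg_k$. Then $F\setminus\zero$ is $\R$-bounded (and hence $\R_\zero$-bounded, since the relations agree there), say by a single response $\pi$. I then need a single response $\pi^*$ that still meets every member of $F\setminus\zero$ and in addition meets every $z\in\zero$ in the switched sense of $\R_\zero$. This contradicts the unboundedness of $F$ and gives $\eeg_k\le(\eeg_k)_\zero$.

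\emph{Main obstacle: the merging lemma.} The key step is to build $\pi^*$ from $\pi$ so that it
\begin{enumerate}
\item is no worse than $\pi$ on functions outside $\zero$, and
\item wins the switched condition on all eventually-zero functions simultaneously.
\end{enumerate}
This is exactly where the standard $\ee$ fails, by \Cref{thm:Con_eezero<ee}. The specific feature of the variation $\eeg_k$ that I would exploit is that the predictor may react to what it has seen and may use its $k\ge1$ guesses adaptively. The first attempt is to let $\pi^*$ copy $\pi$ except along histories ending in a long block of zeros. Along such histories it reserves one of its guesses for the value $0$ (or, in the game formulation, switches to a fixed response that defeats eventually-zero plays), and it reverts to $\pi$ as soon as a nonzero value appears.

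The delicate point is that a function $x\notin\zero$ may have arbitrarily long zero blocks. So I must check that these deviations happen only on a set of positions that does not affect whether $x$ is met by $\pi$; for example, the deviations should fall on steps where $\pi$ was anyway predicting $0$, or be absorbed by the ``for almost all / infinitely many'' clause. If this bookkeeping goes through, Steps 1 and 2 together give $(\eeg_k)_\zero=\eeg_k$.
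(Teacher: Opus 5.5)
Your Step~1 is correct and is essentially the paper's proof of $(\eeg_k)_\zero\le\eeg_k$; the paper simply takes an unbounded family inside $(\omega\setminus1)^\omega$. In the pull-back you must pad $\pi(s)$ back to $k$ elements when $0\in\sigma(\varphi(s))$, but that is cosmetic. The genuine gap is Step~2. The ``merging lemma'' is the whole content of $\eeg_k\le(\eeg_k)_\zero$, and you leave it at ``if this bookkeeping goes through.'' Moreover, the modification you sketch points the wrong way. In $\R_\zero$, a function $z\in\zero$ is met by $\pi^*$ exactly when $\pi^*$ does \emph{not} globally predict $z$. So along zero tails $\pi^*$ must \emph{exclude} $0$, e.g.\ output $\{1,\dots,k\}$. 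Reserving a guess for $0$ would make $\pi^*$ predict every eventually-zero function it follows. Once $0$ is excluded, a function $f\notin\zero$ is hurt exactly at steps $n$ where $f(n)=0$, $0\in\pi(f\on n)$, and the history looks like a zero tail. So ``steps where $\pi$ was anyway predicting $0$'' are the harmful steps, not the harmless ones.

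The missing idea is a \emph{uniform} bound on the zero blocks of all $f\in F\setminus\zero$. Whatever rule you fix for a ``long block of zeros'' (say, deviate at $n$ when the history ends in at least $h(n)$ zeros), each $f\in F\setminus\zero$ may trigger it only finitely often. An $f\notin\zero$ with infinitely many zero blocks long enough to trigger deviations is mispredicted infinitely often. So $h$ must control the gap structure of the whole family at once, and your sketch never addresses this. The paper gets the bound from $\bb$. Since $|F|<\eeg_k\le\addn\le\bb$, there is one interval partition $J=\seq{J_m:m<\omega}$ dominating the partitions $I^f$ cut at the nonzero positions of each $f\in F\setminus\zero$. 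Hence each such $f$ has a nonzero value in $J_m$ for almost all $m$. Now set $\pi^*(\sigma)=\pi(\sigma)$ if $|\sigma|\in J_m$ and $\sigma$ has a nonzero entry in $J_{m-1}$, and $\pi^*(\sigma)=\{1,\dots,k\}$ otherwise. Then $\pi^*$ eventually agrees with $\pi$ along every $f\in F\setminus\zero$, and it fails at every late position of every $z\in\zero$.

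This also corrects your diagnosis of why $\ee$ behaves differently. Standard predictors are adaptive too, and the identical construction gives $\ee_\zero\ge\min\{\ee,\bb\}$ (Lemma~\ref{lem:ezero_min_eb}). What makes the $\zero$-operation harmless for $\eeg_k$ is the ZFC inequality $\eeg_k\le\bb$, not the $k$ guesses.
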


	%for (the standard) evasion number $\ee$. 
	%
	%
	%On the other hand, we find another example of $\R$, which is a variation of the standard concept of evasion and prediction, such that $\bb(\R)$ stays the same even after such a change.
	%\begin{dfn}
	%	\begin{itemize}
		%		\item $\fstr\coloneq \bigcup_{n<\omega}2^{(2^{<n})}$, the set of all finite partial strategies.
		%		\item $\pstt\coloneq\{(\sigma,F):\sigma\in\fstr, F\in[2^\omega]^{<\omega}\}$. $(\sigma^\prime, F^\prime)\leq(\sigma,F):\Leftrightarrow \sigma^\prime\supseteq\sigma, F^\prime\supseteq F$ and for all $n\in\left[|\sigma|,|\sigma^\prime|\right)$ and $y\in F$, $\sigma^\prime(y\on n)\leq y(n)$.
		%		\item $\pst\coloneq\{(\sigma,F)\in\pstt: F\subseteq 2^\omega\setminus\zero \}$ and the order is defined by restriction.
		%		\item For both $\pst$ and $\pstt$, $\sigma_G$ denotes the generic strategy $\sigma_G\coloneq\bigcup_{(\sigma,F)\in G}\sigma$ for a generic filter $G$. 
		%		
		%	\end{itemize}
	%	
	%\end{dfn}
	%
	%\begin{lem}
	%	\begin{enumerate}
		%		\item $\pst$ and $\pstt$ are $\sigma$-centered.
		%		\item For $y\in 2^\omega$, $\Vdash_{\pstt} y\rp_0\sigma_G$, where $\rp_0\coloneq\bigcup_{n<\omega}\rp_{0,n}$.
		%		\item For $y\in 2^\omega\setminus\zero$, $\Vdash_{\pst} y\rp_0\sigma_G$.
		%		
		%		
		%		
		%		\item For $y\in 2^\omega\setminus\zero$, $\Vdash_{\pstt} \sigma_G*y\in2^\omega\setminus\zero$.
		%		\item For $y\in 2^\omega$, $\Vdash_{\pst} \sigma_G*y\in2^\omega\setminus\zero$.
		%	\end{enumerate}
	%\end{lem}
	%\section{Preliminaries}
	%\input{zero_RS.tex}
	\section{$\zero$-prediction}
	%\subsection{$\zero$-prediction}
	First we clarify the notation of the standard evasion and prediction.
	\begin{dfn}
		\begin{itemize}
			\item A pair $\pi=(D,\{\pi_n:n\in D\})$ is a \textit{predictor} if $D\in\ooo$ and $\pi_n\colon\omega^n\to\omega$ for each $n\in D$.
			\item For a predictor $\pi=(D,\{\pi_n:n\in D\})$ and a function $f\in\oo$, $\pi$ \textit{predicts} $f$ if $f(n)=\pi_n(f\on n)$ for almost all $n\in D$.
			\item $\Pred$ denotes the set of all predictors.
			\item $\mathbf{PR}\coloneq\langle \oo,\Pred,\sqsubset^\mathrm{p}\rangle$, where $f\sqsubset^\mathrm{p}\pi\colon\Leftrightarrow f$ is predicted by $\pi$.
		\end{itemize}

	\end{dfn}
	
	We introduce \textit{$\zero$-prediction}. 
	\begin{dfn}
		Let $\pi$ be a predictor and $f\in\oo$. 
		\begin{enumerate}
			\item $\pi$ \textit{$\zero$-predicts} $f$, denoted by $f\zp\pi$, if either:
			\begin{itemize}
				\item $f\notin\zero$ and $f\sqsubset^\mathrm{p}\pi$, or%$\pi$ predicts $f$, or
				\item $f\in\zero$ and $\lnot(f\sqsubset^\mathrm{p}\pi)$.%$\pi$ does not predict $f$.
			\end{itemize}
			\item Put $\mathbf{PR}_\zero\coloneq\langle \oo, \Pred,\zp \rangle$ and $\ee_\zero\coloneq\bb(\mathbf{PR}_\zero)$.
		\end{enumerate}
	\end{dfn}
	
	\begin{lem}\label{lem:ee_zero_ee}
		$\ee_\zero\leq\ee$.
	\end{lem}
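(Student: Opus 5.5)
Let $\ee=\bb(\mathbf{PR})$. Fix an unbounded family $F\subseteq\oo$ for $\mathbf{PR}$ with $|F|=\ee$, so no single predictor predicts every member of $F$. We will turn $F$ into a $\mathbf{PR}_\zero$-unbounded family of the same size, allowing one countable addition. Since $\ee$ is uncountable (every countable family of functions is predicted by a single predictor, by the standard diagonal argument), adding countably many functions does not change the cardinality. That gives $\ee_\zero\leq\ee$.

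First, we may assume $F\cap\zero=\emptyset$. One option is to argue that $F\setminus\zero$ is still $\mathbf{PR}$-unbounded. This holds because $\zero$ is countable, and a single predictor predicts every function in $\zero$: take $D=\omega$ and $\pi_n(s)=0$ for all $s$; an eventually zero $f$ then satisfies $f(n)=0=\pi_n(f\on n)$ for almost all $n$. But we cannot merge two predictors into one in general, so this route fails. Instead we replace each $f\in F$ by $f'=f+1$ (pointwise). Then $f'\notin\zero$, and prediction transfers: if $\pi$ predicts $f'$, then $\pi'_n(s)=\pi_n(s+1)\dot-1$ predicts $f$, where $s+1$ is the pointwise shift. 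The map $f\mapsto f+1$ is injective. So $F'=\{f+1:f\in F\}$ is $\mathbf{PR}$-unbounded, has size $\ee$, and is disjoint from $\zero$.

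Now we show that $F'' = F'\cup\{\bar 0\}$, where $\bar 0$ is the constant zero function, is $\mathbf{PR}_\zero$-unbounded. Suppose some predictor $\pi$ $\zero$-predicts every member of $F''$. For $f\in F'$ we have $f\notin\zero$, so $\pi$ predicts $f$ in the ordinary sense. Hence $\pi$ predicts all of $F'$, contradicting the choice of $F'$. The element $\bar0$ is not even needed, so $F'$ alone is $\mathbf{PR}_\zero$-unbounded, and $\ee_\zero\le|F'|=\ee$.

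The only point needing care is the transfer step. Since $\pi_n(s+1)$ could be $0$, the truncated subtraction $\dot-$ is used. If $f'(n)=\pi_n(f'\on n)$, then $f(n)=f'(n)-1=\pi'_n(f\on n)$, because $f'\on n=(f\on n)+1$ and $f'(n)\ge1$. This is routine. I also need $\ee_\zero$ to be well defined, that is, $\mathbf{PR}_\zero$ non-trivial. The argument just given shows $\oo$ is unbounded. Dominating holds because for any $f$ there is a predictor that $\zero$-predicts it: if $f\notin\zero$, use one that predicts $f$; if $f\in\zero$, use the predictor with constant value $f(n)+1$ on $D=\omega$.
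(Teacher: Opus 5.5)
Your proof is correct and takes the same approach as the paper. The paper likewise moves an evading family of size $\ee$ into $(\omega\setminus1)^\omega$, which is disjoint from $\zero$, so that $\zero$-prediction there is ordinary prediction. You additionally write out why the shift $f\mapsto f+1$ preserves unboundedness, via $\pi'_n(s)=\pi_n(s+1)\dot{-}1$, which the paper leaves implicit. The detours through $\bar 0$, the uncountability of $\ee$, and the discarded $F\setminus\zero$ route are unnecessary, as you yourself note.
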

	\begin{proof}
		By identifying $\omega$ and $\omega\setminus1$, we can take an evading family  $F\subseteq(\omega\setminus1)^\omega$ of size $\ee$.
		Then, $F$ is also a $\zero$-evading family.
	\end{proof}

	%\begin{lem}
	%	$\ee_\zero\leq\none$.
	%\end{lem}
	%\begin{proof}
	%	For a predictor $\pi$, $\{f\in\oo:f\zp\pi\}\subseteq\{f\in\oo\setminus\zero:f\sqsubset^\mathrm{p}\pi\}\cup\zero\in\mathcal{E}$. 
	%\end{proof}

	%\begin{cor}
	%	\label{cor_p_ez}
	%	$\pp=\mm(\sigma\text{-centered})\leq\ee_\zero$.
	%\end{cor} weaker than below
	
	%For Recall that $\bar{I}=\langle I_n:n<\omega\rangle$ is an interval partition of $\omega$ if  it is a partition and $I_n$ is a (non-empty) interval for each $n$. Assume the intervals are arranged in ascending order, i.e. $\min I_0 = 0, \max I_n + 1 = \min I_{n+1}$ for every $n$.
	%Let $\mathsf{IP}$ be the set of all interval partitions.
	For interval partitions $I=\langle I_n:n<\omega\rangle$ and $J=\langle J_m:m<\omega\rangle$ of $\omega$, we say $I$ \textit{is dominated by} $J$, denoted by $I\leq^*J$, if 
	%$ (\forall^\infty m)(\exists n)$
	for almost all $m$ there is $n$ such that $I_n \subseteq J_m$. It is known that $\bb(\langle\mathsf{IP}, \mathsf{IP}, \leq^*\rangle) = \bb$ where $\mathsf{IP}$ denotes the set of all interval partitions of $\omega$ (see \cite{Bla10}).
	\begin{lem}
		\label{lem:ezero_min_eb}
		$\ee_\zero\geq\min\{\ee,\bb\}$.
	\end{lem}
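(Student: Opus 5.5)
Fix $F\subseteq\oo$ with $|F|<\min\{\ee,\bb\}$. The plan is to build a single predictor that $\zero$-predicts every member of $F$. Split $F=F_1\cup F_0$ with $F_1=F\setminus\zero$ and $F_0=F\cap\zero$. Since $|F_1|<\ee$, there is a predictor $\pi=(D,\{\pi_n:n\in D\})$ predicting every $f\in F_1$. I will modify $\pi$ into a predictor $\pi'$ with two properties: $\pi'$ still predicts every $f\in F_1$, and $\pi'$ predicts \emph{no} eventually zero function. The second property handles all of $F_0$ at once, so the size of $F_0$ plays no role.

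The key observation is that each $f\in F_1$ has infinitely many nonzero values. For such $f$, choose an interval partition $I^f=\langle I^f_n:n<\omega\rangle$ such that every interval $I^f_n$ contains some $a<d$ with $f(a)\neq0$ and $d\in D$. This is possible because both $f^{-1}(\omega\setminus\{0\})$ and $D$ are infinite. Since $|F_1|<\bb=\bb(\langle\mathsf{IP},\mathsf{IP},\leq^*\rangle)$, there is a single interval partition $J$ with $I^f\leq^* J$ for all $f\in F_1$. Hence, for each $f\in F_1$ and almost all $m$, the block $J_m$ contains some $I^f_n$. So $J_m$ contains a point $a$ with $f(a)\neq0$ and a point of $D$ above $a$.

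Next I choose the new predictor. For each $m$ for which $J_m\cap D\neq\emptyset$, let $d_m=\max(J_m\cap D)$, and let $D'$ be the set of these $d_m$. This set is infinite by the property of $J$ for any single $f\in F_1$. (If $F_1=\emptyset$, the argument is trivial: take any predictor that always guesses $1$.) For $d=d_m\in D'$ and $s\in\omega^{d}$, define
\[
\pi'_{d}(s)=\begin{cases}\pi_{d}(s)&\text{if } s(a)\neq0 \text{ for some } a\in J_m\cap d,\\ 1&\text{otherwise.}\end{cases}
\]
For $f\in F_1$ and almost all $m$, $f$ has a nonzero value in $J_m$ below $d_m$, because the point of $D$ found in $J_m$ above that nonzero value is at most $d_m$. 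So $\pi'_{d_m}(f\on d_m)=\pi_{d_m}(f\on d_m)$, and this equals $f(d_m)$ for almost all $m$, since $\pi$ predicts $f$ and $D'\subseteq D$. Thus $\pi'$ predicts $f$. On the other hand, if $f\in\zero$, then for all large $m$ the restriction $f\on J_m$ is identically zero. Then $\pi'_{d_m}(f\on d_m)=1\neq0=f(d_m)$ for infinitely many $d_m\in D'$, so $\pi'$ does not predict $f$. Hence $f\zp\pi'$ for all $f\in F$, which shows $F$ is not $\mathbf{PR}_\zero$-unbounded and gives $\ee_\zero\geq\min\{\ee,\bb\}$.

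The main obstacle is the synchronization step: guaranteeing that a nonzero value of $f$ appears strictly \emph{before} the chosen prediction point $d_m$ within the same block. This is exactly why I build each $I^f$ so that every interval contains a nonzero value followed by a point of $D$, and why I take $d_m$ to be the last point of $D$ in $J_m$. Everything else is routine bookkeeping with the definition of $\leq^*$ on interval partitions.
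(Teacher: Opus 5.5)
Your proof is correct and follows essentially the same route as the paper. Both take a predictor $\pi$ for $F\setminus\zero$ from $|F|<\ee$, use $|F|<\bb$ to get a single interval partition $J$ dominating partitions built from the nonzero positions of each $f$, and modify $\pi$ to copy its guess when the relevant block contains a nonzero value and to guess $1$ otherwise. The only difference is how the prediction point is synchronized with the block: the paper passes to a subsequence of blocks $J_{m_i}$ and takes $d_i\in D$ strictly between $J_{m_i}$ and $J_{m_{i+1}}$, so the whole block lies below $d_i$ automatically, whereas you put $d_m=\max(J_m\cap D)$ inside the block and compensate by requiring each $I^f_n$ to contain a point of $D$ above a nonzero value.
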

	\begin{proof}
		Let $F\subseteq\oo$ with size $<\min\{\ee,\bb\}$ and $F^-\coloneq F\setminus\zero$.
		For $f\in F^-$ and $k<\omega$, let $i^f_k$ denote the $k$-th element of $\{i<\omega:f(i)\neq0\}\in\ooo$ and put $I^f_k\coloneq\left[i_{k-1},i_k\right)$ ($i_{-1}\coloneq0)$.
		Define an interval partition $I^f$ by $I^f\coloneq\langle I^f_k:k<\omega\rangle$.
		Since $|F^-|<\bb$, there is an interval partition $J=\langle J_m:m<\omega\rangle$ dominating all $\{I^f:f\in F^-\}$. %, namely, for any $f\in F^- \text{ and for all but finitely many }m<\omega,\text{ there is }k<\omega\text{ such that }I^f_k\subseteq J_m.$
		Since $|F^-|<\ee$, there is a predictor $\pi=(D,\langle \pi_k:k\in D\rangle)$ predicting all $f\in F^-$.
		Inductively construct $A\coloneq\{m_0<m_1<\cdots\}\in\ooo$ and $D^\prime\coloneq\{d_0<d_1<\cdots\}\in[D]^\omega$ such that $\max J_{m_i}<d_i<\min J_{m_{i+1}}$ holds for all $i<\omega$.
		Define a predictor $\pi^\prime\coloneq(D^\prime,\langle \pi^\prime_k:k\in D^\prime\rangle)$ as follows: for $k=d_i\in D^\prime$ and $\sigma\in\omega^k$,
		\begin{equation}
			\label{eq_piPrime}
			\pi^\prime_k(\sigma)\coloneq
			\begin{cases}
				\pi_k(\sigma) & \text{if } \sigma(n)\neq0\text{ for some }n\in J_{m_i},\\
				1 & \text{otherwise. }
			\end{cases}
		\end{equation}
		Clearly $\pi^\prime$ does not predict any $f\in\zero$, so it suffices to show that $\pi^\prime$ predicts all $f\in F^-$.
		Since $J$ dominates $I^f$, for all but finitely many $i<\omega$ there is $k_i\geq 1$ such that $I_{k_i}^f\subseteq J_{m_i}$, so particularly $n\coloneq\min I_{k_i}^f\in J_{m_i}$ satisfies $f(n)\neq0$.
		Thus, by \eqref{eq_piPrime} $\pi^\prime_{d_i}(f\on d_i)=\pi_{d_i}(f\on d_i)$ for all but finitely many $i$.
		Therefore, since $\pi$ predicts $f$, $\pi^\prime$ also predicts $f$.
	\end{proof}

	\begin{rem}
		A similar inequality holds for splitting game numbers: $\min\{\sIwstar,\bb\}\leq\sIstar\leq\sIwstar$ was proved in \cite{CGHY26}.
	\end{rem}

	%\subsection{no-influence case}
	
	We will see that $\ee_\zero$ and $\ee$ are consistently different in the next section. 
	%This result should be compared to the following case
	In contrast, there is a case where this ``$\zero$-operation'' does not change the original cardinal invariant. 
	
	Blass studied many variations of the evasion number in \cite[Chapter 10]{Bla10}.
	Among such variations, we treat\textit{ global (adaptive) prediction of width $k$} for a natural number $k\geq1$. This concept differs from standard prediction in the following two respects: %The difference from the standard prediction is the following two points:
	First, for a 
	predictor $\pi=(D,\{\pi_n:n\in D\})$, we always require $D=\omega$.
	Second, 
	%the predictor guesses $k$-many possibilities of the $f(n)$ of a function $f$ being predicted, not necessarily the exact value of $f(n)$.
	at each stage $n$, the predictor guesses $k$ possible values for $f(n)$, rather than necessarily guessing the exact value $f(n)$.
	This prediction is formalized (more concisely) as follows: 
	\begin{dfn}
		Let $k\geq 1$.
		\begin{itemize}
			\item A function $\pi$ is a \textit{global predictor of width $k$} if $\dom(\pi)=\fsq$ and $\ran(\pi)\subseteq[\omega]^k$.
			\item $\Pred_k$ denotes the set of all global predictors of width $k$.
			\item For $\pi\in\Pred_k$ and a function $f\in\oo$, we say $\pi$ \textit{globally predicts} $f$, denoted by $f\in^\mathrm{p}\pi$, if $f(n)\in\pi(f\on n)$ for almost all $n\in\omega$.
			\item Put $\mathbf{PR}^\mathrm{G}_k\coloneq\langle \oo,\Pred_k,\in^\mathrm{p}\rangle$ and  $\eeg_k\coloneqq\bb(\mathbf{PR}^\mathrm{G}_k)$.
		\end{itemize}
	\end{dfn}
	%Blass showed $\eeg_1=\aleph_1$, so hereafter we assume $k\geq2$.

	Recall that for a natural number $k\geq2$, a poset $\p$ is \textit{$\sigma$-$k$-linked} if there is a countable decomposition $\p=\bigcup_{n\in\omega}Q_n$ such that for any $n$, any $k$ conditions in $Q_n$ have a common extension in $\p$. $\mm(\sigma\text{-}k\text{-linked})$ denotes the least cardinality $\kappa$ such that Martin's axiom for $\sigma$-$k$-linked posets fails at $\kappa$. 
	
	Blass showed the following:
	
	\begin{lem}[{\cite{Bla10}}]
		$\eeg_1=\aleph_1$ and  $\mm(\sigma\text{-}k\text{-linked})\leq\eeg_k\leq\addn$ for any $k\geq2$.
	\end{lem}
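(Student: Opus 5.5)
For $\eeg_1=\aleph_1$ I will prove both inequalities directly. The bound $\eeg_1\leq\aleph_1$ comes from a counting argument. If $\pi\in\Pred_1$ globally predicts $f$, say $f(m)\in\pi(f\on m)$ for all $m\geq n$, then $f$ is completely determined by $f\on n$ and $\pi$, because $\pi(s)$ is a singleton. So a single global predictor of width $1$ predicts only countably many functions. Hence any family of $\aleph_1$ distinct functions is $\mathbf{PR}^\mathrm{G}_1$-unbounded.

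For $\eeg_1\geq\aleph_1$, let $\{f_i:i<\omega\}$ be given. Define $\pi(s)\coloneq\{f_i(|s|)\}$ for the least $i$ with $s\subseteq f_i$, and an arbitrary singleton if there is no such $i$. Fix $j$. Only finitely many $i<j$ have $f_i\neq f_j$, and each of them splits off from $f_j$ at some finite height. Beyond all these heights, the least $i$ with $f_j\on n\subseteq f_i$ satisfies $f_i=f_j$, so $f_j(n)\in\pi(f_j\on n)$.

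For $\mm(\sigma\text{-}k\text{-linked})\leq\eeg_k$, fix $F\subseteq\oo$ with $|F|<\mm(\sigma\text{-}k\text{-linked})$. I will use the following poset.
\begin{itemize}
\item Conditions are triples $(p,n,E)$, where $p$ is a finite partial function from $\omega^{<n}$ to $[\omega]^k$, $n<\omega$, and $E\in[F]^{<\omega}$ is such that the restrictions $f\on n$, $f\in E$, are pairwise distinct.
\item The order is $(p',n',E')\leq(p,n,E)$ iff $p'\supseteq p$, $n'\geq n$, $E'\supseteq E$, and for every $f\in E$ and every $s=f\on m\in\dom(p')\setminus\dom(p)$ we have $f(m)\in p'(s)$.
\end{itemize}

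\emph{$\sigma$-$k$-linkedness.} Group the conditions by the countable type $(p,n,\{f\on n:f\in E\})$. Take $k$ conditions of the same type. For a new node $s$ of length $\geq n$, each condition has at most one $f\in E$ through $s$, so at most $k$ values $f(|s|)$ are required there. These fit into a $k$-set, which we pad if fewer are needed. This produces a common extension.

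\emph{Density.} The following sets are dense: adding a given $f\in F$ to $E$ (after first raising $n$ so that the restrictions stay pairwise distinct), and putting a given $s$ into $\dom(p)$ (after raising $n$ past $|s|$).

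A filter meeting these $|F|$-many dense sets yields a global predictor of width $k$. Every $f\in F$ is globally predicted by it from the height $n$ of the stage at which $f$ entered $E$ onward: nodes of length $\geq n$ were not yet in the domain at that stage, so each of them is assigned later under the constraint $f(m)\in p'(s)$.

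For $\eeg_k\leq\addn$, I will use Bartoszyński's characterization: $\addn$ is the least size of a family of functions in $\oo$ not localized by a single slalom $S$ with $|S(n)|\leq h(n)$, for any fixed $h\to\infty$. Fix an enumeration $\langle t_c:c<\omega\rangle$ of $\fsq$. Let $F$ be a family with $|F|<\eeg_k$ and let $\pi\in\Pred_k$ globally predict every $f\in F$. Define $S(n)$ as the set of $u\in\omega^n$ that extend some $t_c$ with $c<n$ and satisfy $u(m)\in\pi(u\on m)$ for all $|t_c|\leq m<n$. Then $|S(n)|\leq n\,k^n$.

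Given $f\in F$, choose $m_0$ such that $\pi$ predicts $f$ from $m_0$ on, and let $c$ be the index of $f\on m_0$. For every $n\geq\max(c+1,m_0)$ we get $f\on n\in S(n)$. Hence $S'(n)\coloneq\{u(n):u\in S(n+1)\}$ is a slalom of width $(n+1)k^{n+1}$ that localizes every $f\in F$. By Bartoszyński's characterization, $|F|<\addn$, which gives $\eeg_k\leq\addn$.

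The main obstacle is the linkedness step. The indexing by the restrictions $f\on n$ must really guarantee that $k$ same-type conditions never demand more than $k$ values at a single node. The domain restriction $\dom(p)\subseteq\omega^{<n}$ must also be chosen so that adding a function late never conflicts with nodes that were already predicted.
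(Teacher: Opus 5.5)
\textbf{There is a genuine gap in the step $\mm(\sigma\text{-}k\text{-linked})\le\eeg_k$.} (The paper only cites Blass here, so I am checking your argument on its own terms.) Your proofs of $\eeg_1=\aleph_1$ and of $\eeg_k\le\addn$ are fine. One small fix in the last step: argue that if $\addn<\eeg_k$, a family of size $\addn$ witnessing Bartoszy\'nski's characterization would be localized by your $S'$. Localizability of a single given $F$ does not by itself yield $|F|<\addn$.

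The problem is that in your poset the set $\{(p,n,E):s\in\dom(p)\}$ is \emph{not} dense. Raising the height costs nothing, so a condition may contain more than $k$ functions that pass through a node $s\notin\dom(p)$ with $|s|<n$ and take pairwise different values at $|s|$. Then no extension can ever put $s$ into the domain. This can happen on the branch of a function already in $E$. Start from $(p,n,\{f\})$. Add $k$ members of $F$ that extend $f\on n$ and whose values at $n$ are pairwise distinct and different from $f(n)$, and raise the height past $n$. Now $f\on n$ is blocked forever. Your final step (``each of them is assigned later'') presupposes exactly that every node $f\on m$, $m\ge n$, eventually gets assigned.

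A sanity check confirms the argument cannot work as written. Finitely many conditions $(p,n_i,E_i)$ with the same $p$ always have the common extension $(p,N,\bigcup_i E_i)$ for large $N$: no new nodes are added, so there are no constraints. Hence your poset is $\sigma$-centered and $k$ plays no role. Run with $k=1$, your argument would prove $\mm(\sigma\text{-centered})\le\eeg_1=\aleph_1$, which fails under $\mathrm{MA}+\neg\mathrm{CH}$.

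\textbf{Repair.} Make raising the height carry obligations: add to the order the clause that $f\on m\in\dom(p')$ for all $f\in E$ and $n\le m<n'$. Your last clause then gives $f(m)\in p'(f\on m)$.
\begin{itemize}
\item Raising the height is dense, since each node of length $\ge n$ lies on at most one member of $E$.
\item Adding a new $f$ is still dense: first raise the height to a level separating $E\cup\{f\}$, filling only the nodes of the old members of $E$.
\item Now $k$ conditions $(p,n,E_1),\dots,(p,n,E_k)$ are compatible. When a common extension fills the nodes between $n$ and a separating height $N$, each node receives at most one demand from each $E_i$. By contrast, $k+1$ conditions can be incompatible. This is where the width $k$ genuinely enters, consistently with $\eeg_1=\aleph_1$.
\item If $(p,n,E)\in G$ with $f\in E$ and $m\ge n$, a common extension in $G$ with a condition of height $>m$ has $f\on m$ in its domain with $f(m)$ in the assigned $k$-set.
\item Nodes never assigned by $G$ can be given arbitrary values.
\end{itemize}
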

	
	We introduce $\zero$-global prediction of width $k$:
	\begin{dfn}
		Let $k\geq 1$.
		\begin{itemize}
			\item For a function $f\in\oo$ and a global predictor $\pi$, we say $\pi$ \textit{$\zero$-globally predicts} $f$, denoted by $f\in^\mathrm{p}_\zero\pi$, if either: 
			\begin{itemize}
				\item $f\notin\zero$ and $f\in^\mathrm{p}\pi$, or%$\pi$ predicts $f$, or
				\item $f\in\zero$ and $\lnot(f\in^\mathrm{p}\pi)$.%$\pi$ does not predict $f$.
			\end{itemize}
			\item Put $(\mathbf{PR}^\mathrm{G}_k)_\zero\coloneq\langle \oo,\Pred_k,\in^\mathrm{p}_\zero\rangle$ and $(\eeg_k)_\zero\coloneqq\bb((\mathbf{PR}^\mathrm{G}_k)_{\zero})$.
		\end{itemize}

	\end{dfn}
	
	The $\zero$-operation does not change the value of $\eeg_k$:
	\begin{thm}\label{thm:eegk_zero}
		For any $k\geq1$, $(\eeg_k)_\zero=\eeg_k$.
	\end{thm}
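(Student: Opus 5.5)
The proof has two inequalities. For $(\eeg_k)_\zero\le\eeg_k$ I would shift an evading family away from $\zero$. For $(\eeg_k)_\zero\ge\eeg_k$ I would re-encode each non-eventually-zero function by its sequence of nonzero values and gaps. A global predictor for these codes is then turned into a global predictor that handles $F\setminus\zero$ correctly and automatically fails on every eventually zero function. The case $k=1$ is included, since nothing below uses $k\ge2$.

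\emph{Upper bound.} Let $F$ be an $\mathbf{PR}^\mathrm{G}_k$-unbounded family of size $\eeg_k$, and put $F+1\coloneqq\{f+1:f\in F\}$, where $(f+1)(n)=f(n)+1$. Then $F+1\cap\zero=\emptyset$, so it suffices to check that $F+1$ is still unbounded. Suppose $\pi'\in\Pred_k$ globally predicted all $f+1$. Define $\pi(s)$ to be $\{m-1:m\in\pi'(s+1),\,m\ge1\}$, padded with fresh large numbers to have exactly $k$ elements. Here $(s+1)(i)=s(i)+1$. Whenever $(f+1)(n)\in\pi'((f+1)\on n)$ we get $f(n)\in\pi(f\on n)$, so $\pi$ would globally predict all of $F$, a contradiction. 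Hence $F+1$ is $(\mathbf{PR}^\mathrm{G}_k)_\zero$-unbounded, which gives $(\eeg_k)_\zero\le\eeg_k$.

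\emph{Lower bound.} Let $F\subseteq\oo$ with $|F|<\eeg_k$, and let $F^-\coloneqq F\setminus\zero$. For $f\in F^-$, let $i^f_0<i^f_1<\cdots$ enumerate $\{i:f(i)\ne0\}$, set $i^f_{-1}\coloneqq-1$, and define
\[
h_f(j)\coloneqq\bigl\langle i^f_j-i^f_{j-1},\,f(i^f_j)\bigr\rangle\in\omega\times\omega ,
\]
identified with an element of $\omega$ via a bijection. Since $|\{h_f:f\in F^-\}|<\eeg_k$, some $\pi\in\Pred_k$ globally predicts every $h_f$. I then define $\pi'\in\Pred_k$ as follows. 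Given $s\in\omega^n$, let $j$ be the number of nonzero entries of $s$, let $p$ be the position of the last one ($p=-1$ if none), and let $t\in\omega^j$ be the gap/value code of $s$, so that $t=h_f\on j$ whenever $s=f\on n$. Decode $\pi(t)$ as a set $P$ of $k$ pairs $(g,v)$, with $g\ge1$ and $v\ge1$ (pairs violating this are simply ignored).
\begin{itemize}
\item If $n>p+g$ for every $(g,v)\in P$, set $\pi'(s)\coloneqq\{1,\dots,k\}$.
\item Otherwise, let $\pi'(s)$ consist of $v$ for each $(g,v)\in P$ with $p+g=n$, together with $0$ for each pair with $p+g\ne n$, padded to size $k$.
\end{itemize}

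\emph{Verification.} First take $f\in F^-$, and choose $j_0$ such that $h_f(j)\in\pi(h_f\on j)$ for all $j\ge j_0$. Consider any $n>i^f_{j_0-1}$ and let $j$ be the number of nonzero entries of $f\on n$, so $j\ge j_0$. Let $p=i^f_{j-1}$; the true pair $(g,v)=h_f(j)$ lies in $P$ and satisfies $n\le p+g=i^f_j$, so we are in the second case. If $n=i^f_j$, this pair contributes $v=f(n)$. If $n<i^f_j$, it contributes $0=f(n)$. Either way $f(n)\in\pi'(f\on n)$, so $\pi'$ globally predicts $f$.

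Now take $f\in\zero$ with last nonzero position $p$. For all $n>p$ the code $t$ and the finite set $P$ are fixed. Hence for all sufficiently large $n$ the first case applies, and $0=f(n)\notin\pi'(f\on n)$. So $\pi'$ does not globally predict $f$, i.e.\ $f\in^\mathrm{p}_\zero\pi'$. Thus $F$ is bounded in $(\mathbf{PR}^\mathrm{G}_k)_\zero$, and $(\eeg_k)_\zero\ge\eeg_k$.

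\emph{Main obstacle.} The key difficulty is that, unlike in \Cref{lem:ezero_min_eb}, a global predictor has domain all of $\fsq$ and must guess at every $n$, so one cannot simply skip stages where the history looks like it is ending in zeros. The gap coding resolves this: it lets the predictor know exactly when a nonzero value is due, so it may refuse $0$ only after every predicted gap has expired. The points to check carefully are that the true pair always keeps $n$ in the second case, and that padding never destroys width exactly $k$.
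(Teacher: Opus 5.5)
Your proof is correct, but the lower bound takes a genuinely different route from the paper's. The upper bound is the paper's argument made explicit; the paper simply takes the unbounded family inside $(\omega\setminus1)^\omega$.

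For the lower bound, the paper invokes Blass's inequality $\eeg_k\le\addn$ together with $\addn\le\bb$. This yields an interval partition $J=\langle J_m:m<\omega\rangle$ dominating the partitions $I^f$ cut out by the nonzero positions of the $f\in F\setminus\zero$. Then $\pi'(\sigma)$ copies $\pi(\sigma)$ when $\sigma$ has a nonzero entry in the previous block $J_{m-1}$, and outputs $\{1,\dots,k\}$ otherwise. This is the proof of Lemma~\ref{lem:ezero_min_eb} transplanted: it really gives $(\eeg_k)_\zero\ge\min\{\eeg_k,\bb\}$, and the minimum disappears only because $\eeg_k\le\bb$ holds in ZFC.

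You instead make the predictor itself forecast where the next nonzero entry occurs, by globally predicting the gap/value codes $h_f$. Your case analysis checks out: the true pair always satisfies $p+g=i^f_j\ge n$, and for $f\in\zero$ the finite set $P$ freezes after the last nonzero entry.

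What your route buys:
\begin{itemize}
\item It is self-contained. It needs neither a dominating interval partition nor any ZFC inequality relating $\eeg_k$ to $\bb$, so your remark that $k=1$ needs no special treatment is right.
\item It isolates exactly where globality is used. Your decoding needs a guess for $h_f(j)$ at every $j$. An ordinary predictor for the codes guesses only on its domain $D$, and the stages $n$ at which that information becomes usable depend on $f$, so they cannot be collected into a single domain for $\pi'$. The trick therefore does not transfer to $\ee$, as it must not, given Theorem~\ref{thm:Con_eezero<ee}.
\end{itemize}

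What the paper's route buys: it is shorter once the cited facts are granted. It also exhibits the common mechanism $\bb(\R_\zero)\ge\min\{\bb(\R),\bb\}$ behind both Lemma~\ref{lem:ezero_min_eb} and this theorem.
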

	%(This is essentially because $\eeg_k=\min\{\bb,\eeg_k\}\leq(\eeg_k)_\zero\leq\eeg_k$.)
	\begin{proof}
		To see $(\eeg_k)_\zero\leq\eeg_k$, take a $\mathbf{PR}^\mathrm{G}_k$-unbounded family $F$ of size $\eeg_k$ in $(\omega\setminus1)^\omega$. $F$ is also  $(\mathbf{PR}^\mathrm{G}_k)_{\zero}$-unbounded, which implies $(\eeg_k)_\zero\leq\eeg_k$.
		
		To see $\eeg_k\leq (\eeg_k)_\zero$, 
		let $F\subseteq\oo$ of size $<\eeg_k$ and $F^-\coloneq F\setminus\zero$.
		For $f\in F^-$ and $l<\omega$, let $i^f_l$ denote the $l$-th element of $\{i<\omega:f(i)\neq0\}\in\ooo$, and put $I^f_l\coloneq\left[i_{l-1},i_l\right)$ ($i_{-1}\coloneq0)$.
		Define an interval partition $I^f$ by $I^f\coloneq\langle I^f_l:l<\omega\rangle$.
		Since $|F^-|<\eeg_k\leq\addn\leq\bb$, there is an interval partition $J=\langle J_m:m<\omega\rangle$ dominating all $\{I^f:f\in F^-\}$.
		%, namely, for any $f\in F^- \text{ and for all but finitely many }m<\omega, \text{ there is }k<\omega\text{ such that }I^f_k\subseteq J_m.$
		Since $|F^-|<\eeg_k$, there is $\pi\in\Pred_k$ that globally predicts all $f\in F^-$.
		%Pick some $A\coloneq\{m_0<m_1<\cdots\}\in\oo$ and $D^\prime\coloneq\{d_0<d_1<\cdots\}\in[D]^\omega$ such that $\max J_{m_i}<d_i<\min J_{m_{i+1}}$ holds for all $i<\omega$.
		Define $\pi^\prime\in\Pred_k$ as follows: for $\sigma\in\omega^n$ and $m\in\omega$ with $n\in J_m$:
		\begin{equation*}
			\label{eq_piPrime_k}
			\pi^\prime(\sigma)\coloneq
			\begin{cases}
				\pi(\sigma) & \text{if } \sigma(i)\neq0\text{ for some }i\in J_{m-1},\\ %~(\text{put }J_{m-1}\coloneqq\emptyset),\\
				\{1,\ldots,k\} & \text{otherwise. }
			\end{cases}
		\end{equation*}
		Then $\pi^\prime$ globally predicts every $f\in F^-$ and does not globally predict any $f\in\zero$.
		Thus, $F$ is $(\mathbf{PR}^\mathrm{G}_k)_{\zero}$-dominated by $\pi^\prime$ and hence $|F|<(\eeg_k)_\zero$.
		Therefore, we have $\eeg_k\leq (\eeg_k)_\zero$.
	\end{proof}

	\section{Consistency of $\ee_\zero<\ee$}
	Towards the consistency of $\ee_\zero<\ee$, we introduce a poset $\prp$ which adds a generic predictor $\pi_G$ and hence increases $\ee$ by iteration. 
	\begin{dfn}
		\label{dfn_PR}
		\textit{	Prediction forcing }$\prp$ consists of tuples $(d,\pi,F)$ satisfying:
		\begin{enumerate}
			\item $d\in\sq$.
			\item $\pi=\langle\pi_n:n\in d^{-1}(\{1\})\rangle$.
			\item for each $n\in d^{-1}(\{1\})$, $\pi_n$ is a finite partial function of $\omega^n\to \omega$.
			\item $F\in[\oo]^{<\omega}$.
			\item for each $f,f^\prime\in F, f\on|d|=f^\prime\on|d|$ implies $f=f^\prime$.
		\end{enumerate}
		$(d^\prime,\pi^\prime,F^\prime)\leq(d,\pi,F)$ if:
		\begin{enumerate}[(i)]

			\item $d^\prime\supseteq d$.
			\item $\forall n\in d^{-1}(\{1\}), \pi_n^\prime\supseteq\pi_n$.
			\item $F^\prime\supseteq F$.
			\item \label{item_PR_order_long}
			For all $ n\in(d^\prime)^{-1}(\{1\})\setminus d^{-1}(\{1\})$ and $f\in F$, we have $f\on n\in\dom(\pi^\prime_n)$ and $\pi^\prime_n(f\on n)=f(n)$.
			
		\end{enumerate}
		For a generic filter $G$, define the generic predictor $\pi_G=(D_G, \langle\pi^G_n:n\in D_G\rangle)$ by $D_G=\bigcup\{d^{-1}(\{1\}):(d,\pi,F)\in G\}$ and for $n\in D_G$,
		$\pi^G_n\coloneqq\bigcup\{\pi_n:(d,\pi,F)\in G,\ n\in d^{-1}(\{1\})\}$. %$\pi^G_n=\pi_n$ for some $(d,\pi,F)\in G$. This is well-defined by the definition of the order.
	\end{dfn}
	It is easy to see that $\prp$ is $\sigma$-centered, and hence ccc.

	We can obtain a forcing which increases $\ee_\zero$ by just restricting $\prp$:
	\begin{dfn}
		$\prp_\zero\coloneq\{(d,\pi,F)\in\prp:F\subseteq\oo\setminus\zero\}$ and the order is defined by restriction of $\prp$. $\pi_G$ is defined in the same way.
	\end{dfn}
	$\prp_\zero$ is $\sigma$-centered as well and hence it is ccc. 
	\begin{lem}
		%	\begin{enumerate}
			%		\item $\prp_\zero$ is $\sigma$-centered.
			%		\item %where $\dot{\pi}_G$ denotes the generic predictor. 
			%	\end{enumerate}
		For $f\in\oo$, $\Vdash_{\prp_\zero} f\zp\dot{\pi}_G$. 
	\end{lem}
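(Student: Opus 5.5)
We split according to whether $f\in\zero$, and in each case use genericity through density arguments in $\prp_\zero$. Throughout, a condition is written $p=(d,\pi,F)\in\prp_\zero$. Two basic facts will be used in both cases.

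First, $D_G$ is forced to be infinite. Given $p$ and any $N$, extend $d$ by zeros up to length $>N$ and then append a $1$ at a new position $n$. To satisfy \eqref{item_PR_order_long}, we must define $\pi_n$ on $\{g\on n:g\in F\}$ by $\pi_n(g\on n)\coloneq g(n)$. This is a well-defined finite partial function because $F$ is finite. Condition (5) must also be preserved, which it is since lengthening $d$ only makes the restrictions more distinct.

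Second, $\pi^G_n$ is total on $\omega^n$ for each $n\in D_G$. This holds because finite partial functions can be extended freely at coordinates $n$ already in $d^{-1}(\{1\})$: clause \eqref{item_PR_order_long} constrains only the new coordinates.

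\emph{Case $f\notin\zero$.} By density, some $p\in G$ has $f\in F$. Here is how to add $f$ to a given $(d,\pi,F)$. First extend $d$ by zeros to a length $L$ such that $f\on L$ differs from $g\on L$ for every $g\in F\setminus\{f\}$, which keeps condition (5). Then add $f$ to $F$. This is allowed precisely because $f\notin\zero$, and it is the only point where the restriction $F\subseteq\oo\setminus\zero$ matters in this case. Once $f\in F$ for some $p\in G$, every stronger condition in $G$ that puts a new $n$ into $D$ must satisfy $\pi^G_n(f\on n)=f(n)$. Hence $f(n)=\pi^G_n(f\on n)$ for all $n\in D_G$ beyond $|d|$, i.e.\ for almost all $n\in D_G$, so $f\sqsubset^{\mathrm p}\pi_G$, and therefore $f\zp\pi_G$.

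\emph{Case $f\in\zero$.} We must show that $\pi_G$ does not predict $f$, i.e.\ that there are infinitely many $n\in D_G$ with $\pi^G_n(f\on n)\neq f(n)$. Fix $p=(d,\pi,F)$ and $N$. Since $F$ is a finite set of functions not in $\zero$, every $g\in F$ differs from $f$ somewhere. So we may choose $L\geq\max(N,|d|)$ such that $g\on L\neq f\on L$ for all $g\in F$, and such that the elements of $F$ have pairwise distinct restrictions to $L$.

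Extend $d$ by zeros to length $L$ and then put a $1$ at position $n=L$. Define $\pi_n(g\on n)\coloneq g(n)$ for $g\in F$, and define $\pi_n(f\on n)\coloneq f(n)+1$. These requirements are compatible because $f\on n$ differs from every $g\on n$. The resulting condition extends $p$ and forces $n\in D_G$ with $\pi^G_n(f\on n)\neq f(n)$, where $n\geq N$.

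By density, for every $N$ there is such an $n\geq N$ in $D_G$. Hence $\pi_G$ fails to predict $f$, so $f\zp\pi_G$.

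The main obstacle is this second case. One has to see that a function in $\zero$ can never be protected by the side condition $F$, and then separate $f$ from all members of $F$ at a single coordinate, so that the prediction at that coordinate can be made wrong for $f$ while staying correct for $F$.
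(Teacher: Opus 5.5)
Your proof is correct and follows the paper's argument. In the case $f\in\zero$, the paper likewise pads $d$ with zeros until $f$ is separated from every $g\in F$, appends a single $1$ at $n=|d|$, and makes the new $\pi_n$ correct on $F$ and wrong on $f\on n$; it uses the value $1\neq 0=f(n)$ where you use $f(n)+1$, and it defers the case $f\notin\zero$ to the standard $\prp$ argument that you spell out. One small correction to your first basic fact: the map $g\on n\mapsto g(n)$ on $F$ is well defined because of condition (5) of the original condition (distinct restrictions to $|d|\le n$), not because $F$ is finite.
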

	
	\begin{proof}
		If $f\notin\zero$,  we obtain $\Vdash_{\prp_\zero} f\sqsubset^\mathrm{p}\dot{\pi}_G$ by the same proof as the standard $\prp$.
		Assume $f\in\zero$ and $(d,\pi,F)\in\prp_\zero$ and $m<\omega$ are given.
		We shall find $n\geq m$ and $(d^\prime,\pi^\prime,F)\leq(d,\pi,F)$ forcing that $n\in D_G$ and $\pi_G(f\on n)\neq f(n)$.
		Since $f\notin F$, we may assume that $f\on m\neq g\on m$ for all $g\in F$ and $f(n)=0$ for all $n\geq m$ by increasing $m$, and that $|d|\geq m$ by adding enough $0$'s after the binary sequence $d$.
		Let $n\coloneq|d|$ and $d^\prime\coloneq d^\frown 1\in2^{n+1}$.
		Define the finite partial function $\pi^\prime_n\colon \omega^n\to\omega$ by $\dom(\pi^\prime_n)\coloneq\{g\on n:g\in F\cup\{f\}\}$ and for $g\on n\in\dom(\pi^\prime_n)$,
		\begin{equation}
			\label{eq_pi}
			\pi^\prime_n(g\on n)\coloneq
			\begin{cases}
				g(n) & \text{if } g\in F,\\
				1 & \text{if }g=f.
			\end{cases}
		\end{equation}
		This is well-defined since for any $g,g^\prime\in F\cup\{f\}$, $g\on n=g^\prime\on n$ implies $g=g^\prime$.
		Let $\pi^\prime\coloneq \pi^\frown \pi^\prime_n$ be the sequence of finite partial functions indexed by $k\in (d^\prime)^{-1}(\{1\})$. 
		By \eqref{eq_pi}, $(d^\prime,\pi^\prime,F)$ extends $(d,\pi,F)$ and forces that $n\in (d^\prime)^{-1}(\{1\})\subseteq D_G$ and $\pi^G_n(f\on n)=\pi^\prime_n(f\on n)=1\neq 0=f(n)$.
	\end{proof}

	When showing the consistency of $\ee_\zero<\ee$, we will use the \textit{Fr-linkedness} property to keep $\ee_\zero$ small in the forcing iteration. %This method stems from Miller's work \cite{Mil81} and 
	Fr-linkedness was formalized by Mej{\'\i}a in \cite{Mej19} and the formalization was described in more detail in \cite{CGHY26}. 
	%To force $\ee_\zero<\ee$, we need a more detailed formulation, so 
	Let us review the framework of Fr-linkedness in the following.
	
	\begin{dfn}
		Let $\p$ be a poset.
		\begin{enumerate}
			\item For a countable sequence $\bar{p}=\langle p_m:m<\omega\rangle\in\p^\omega$, we define $\dot{W}(\bar{p})$ as the $\p$-name of an index set of the sequence $\bar{p}$ as follows:
			\[
			\Vdash_\p\dot{W}(\bar{p})\coloneq\{m<\omega:p_m\in\dot{G}\}
			\]
			where $\dot{G}$ denotes the canonical $\p$-name of a generic filter.
			\item $Q\subseteq \p$ is \emph{Fr-linked} if there exists a function $\lim\colon Q^\omega\to\p$ such that for any countable sequence 
			$\bar{q}\in Q^\omega$, 
			\begin{equation*}
				\textstyle{\lim\bar{q}} \Vdash |\dot{W}(\bar{q})|=\omega.
			\end{equation*}
			%Additionally, if $\ran(\lim)\subseteq Q$, we say $Q$ is Fr-linked.
			\item For an infinite cardinal $\mu$, $\p$ is \emph{$\mu$-Fr-linked} if it is a union of $\mu$-many Fr-linked components. When $\mu=\aleph_0$, we use $\sigma$ instead as usual. Define $<\mu$-Fr-linkedness in the same way (for uncountable $\mu$). 
		\end{enumerate}
		%	We often say ``$\p$ has Fr-limits'' instead of ``$\p$ is $\sigma$-Fr-linked''.
	\end{dfn}
	Since any singleton $\{q\}\subseteq\p$ is Fr-linked, we have:
	\begin{lem}
		\label{lem:poset_is_its_size_Frlinked}
		Every poset $\p$ is $|\p|$-Fr-linked. In particular, Cohen forcing $\mathbb{C}$ is $\sigma$-Fr-linked.
	\end{lem}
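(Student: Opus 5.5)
The statement has two parts: every poset $\p$ is $|\p|$-Fr-linked, and Cohen forcing $\mathbb{C}$ is $\sigma$-Fr-linked. My plan is to prove the first part directly from the definition, using the remark that singletons are Fr-linked. The second part should then follow by specializing to $\mathbb{C}$.

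First I check that each singleton $\{q\}\subseteq\p$ is Fr-linked. The only sequence in $\{q\}^\omega$ is the constant sequence $\bar q=\langle q:m<\omega\rangle$, so I define $\lim\bar q\coloneq q$. Now $q$ forces $q\in\dot G$, and therefore it forces
\[
\dot W(\bar q)=\{m<\omega:q\in\dot G\}=\omega,
\]
which is infinite. So $\lim\bar q\Vdash|\dot W(\bar q)|=\omega$, and $\{q\}$ is Fr-linked with this limit function.

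Next I write $\p=\bigcup_{q\in\p}\{q\}$. This expresses $\p$ as a union of $|\p|$-many Fr-linked components, so $\p$ is $|\p|$-Fr-linked. One subtlety concerns finite $\p$, where $|\p|$ is not an infinite cardinal. In that case I would read the statement as saying that $\p$ is $\mu$-Fr-linked for every infinite $\mu\geq|\p|$, which holds by padding the union with repeated components. Alternatively, the statement can be understood with $|\p|$ replaced by $\max\{|\p|,\aleph_0\}$.

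For Cohen forcing I take the countable presentation $\mathbb{C}=\fsq$ (equivalently $2^{<\omega}$), so $|\mathbb{C}|=\aleph_0$. The first part then gives that $\mathbb{C}$ is $\aleph_0$-Fr-linked, which by the paper's convention is called $\sigma$-Fr-linked.

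I do not expect a real obstacle here. The only points needing care are that the definition of Fr-linkedness quantifies over all sequences in $Q^\omega$, and for a singleton $Q$ there is exactly one such sequence, and that the definition only asks for $\lim\bar q\in\p$, not $\lim\bar q\in Q$.
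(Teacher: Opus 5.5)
Your proposal is correct and follows the paper's own justification: write $\mathbb{P}$ as the union of its singletons and verify directly that each singleton $\{q\}$ is Fr-linked with $\lim\langle q:m<\omega\rangle = q$. Your remarks on finite posets and on the countable presentation of $\mathbb{C}$ are appropriate refinements of what the paper leaves implicit.
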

	
	Brendle and Shelah (essentially) proved the following:
	\begin{lem}[{\cite{BS_E_and_P_2}}]
		\label{lem_prp_is_Frlinked}
		The prediction forcing $\prp$ is $\sigma$-Fr-linked.
	\end{lem}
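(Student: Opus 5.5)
The plan is to use the standard ultrafilter-limit argument. Write $\prp$ as a countable union of components
\[
Q_{d,\pi,\bar s}\coloneq\{(d,\pi,F)\in\prp: F=\{f_0,\dots,f_{l-1}\},\ f_i\on|d|=s_i \text{ for } i<l\},
\]
where $d\in\sq$, $\pi$ is a finite sequence of finite partial functions, and $\bar s=\langle s_i:i<l\rangle$ is a sequence of pairwise distinct elements of $\omega^{|d|}$. There are only countably many such triples, and every condition lies in one of them. Fix a nonprincipal ultrafilter $\mathcal U$ on $\omega$. Given $\bar q=\langle q_m:m<\omega\rangle$ in $Q_{d,\pi,\bar s}$ with $q_m=(d,\pi,\{f^m_0,\dots,f^m_{l-1}\})$ enumerated so that $f^m_i\on|d|=s_i$, let $f_i(k)$ be the $\mathcal U$-limit of $f^m_i(k)$ whenever this limit exists in $\omega$. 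Let $k_i\le\omega$ be the first coordinate where the limit diverges, i.e.\ $\mathcal U$-almost all $f^m_i(k_i)$ are pairwise distinct. Since $f^m_i\on|d|=s_i$ for all $m$, we have $k_i\ge|d|$.

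The limit condition will be $\lim\bar q\coloneq(d,\pi,F^*)$, where $F^*$ consists of the total limits $f_i$ (those with $k_i=\omega$) together with canonical completions $g_i$ of the partial limits $f_i\on k_i$. These completions are needed so that any extension of $\lim\bar q$ is already committed to predicting correctly along $f_i\on k_i$ below $k_i$. Condition (5) holds for $F^*$ because the $s_i$ are distinct.

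To verify $\lim\bar q\Vdash|\dot W(\bar q)|=\omega$, take $r=(d',\pi',F')\le\lim\bar q$ and $N<\omega$; we must find $m\ge N$ with $q_m$ compatible with $r$. Let $L\coloneq|d'|$ and $M\coloneq\max$ of all values appearing in the finite partial functions $\pi'_n$. Pick $m\ge N$ from the $\mathcal U$-large set where three things hold: $f^m_i\on L=f_i\on L$ for the total $f_i$; $f^m_i\on k_i=f_i\on k_i$ for the divergent ones; and $f^m_i(k_i)>M$ and is distinct from all values occurring in $F'$ at $k_i$. The candidate common extension is $(d',\pi'',F'\cup F_m)$, where $\pi''$ extends $\pi'$ by setting $\pi''_n(f^m_i\on n)\coloneq f^m_i(n)$ for $n\in(d')^{-1}(\{1\})\setminus d^{-1}(\{1\})$. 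Condition (5) for $F'\cup F_m$ may fail at length $L$; if so, first extend $d'$ by zeros so that all these restrictions separate, which is harmless since the new coordinates are $0$ and add no obligations. Note that $f^m_i\on n$ for $n>k_i$ is a fresh node, so $\pi''$ is free there. For total $f_i$ and for $n<k_i$, the requirement $\pi''_n(f^m_i\on n)=f^m_i(n)$ is already satisfied by $\pi'$, because $r\le\lim\bar q$ and $f_i$ (respectively $g_i$) lies in $F^*$.

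The main obstacle is the single coordinate $n=k_i$ when $k_i\in(d')^{-1}(\{1\})$. There $f^m_i\on k_i=g_i\on k_i$, and $\pi'_{k_i}$ has already been forced to output $g_i(k_i)$, while $f^m_i(k_i)$ is a large value that differs from it. I would first try to remove this by refining the components further, fixing in the component enough information about which coordinates of each $f\in F$ can later become prediction points, in the spirit of Brendle--Shelah. Failing that, I would choose the completion $g_i$ and the limit condition so that $\lim\bar q$ itself puts $0$ into $d$ at the relevant level, i.e.\ use a limit with longer $d$ padded by zeros up beyond all $k_i<\omega$. The latter requires the sequence $\bar q$ to determine a finite bound on the relevant $k_i$, which is true since there are only $l$ of them. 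I expect this padding version to work, and checking it carefully is the crux of the proof.
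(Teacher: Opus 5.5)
The paper does not prove this lemma; it only cites Brendle--Shelah, so your argument has to stand on its own. The components $Q_{d,\pi,\bar s}$ and the coordinatewise $\mathcal U$-limits are fine. However, the construction you actually carry out fails exactly where you suspect, and the completions $g_i$ are not the cause. As long as $\lim\bar q$ keeps the same $d$, an extension $r\le\lim\bar q$ may set $d'(k_i)=1$ and assign some value $v$ to the node $f_i\on k_i$ in $\pi'_{k_i}$ (with $g_i\in F^*$ it is even forced to take $v=g_i(k_i)$). Then every $q_m$ with $f^m_i\on k_i=f_i\on k_i$ and $f^m_i(k_i)\neq v$ is incompatible with $r$. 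If, e.g., $f^m_i\on k_i$ is constant and $f^m_i(k_i)=m$, at most one $q_m$ survives. Refining the components does not remove this, since some component must contain sequences diverging at a finite coordinate. So as written the proposal has a hole. What closes it is the padding idea, which you mention only as a fallback and leave unchecked.

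That fallback is correct and short, so make it the proof and drop the $g_i$ altogether. Let $d^*\supseteq d$ be $d$ padded with zeros to a length exceeding every finite $k_i$, and put $\lim\bar q\coloneqq(d^*,\pi,\{f_i:k_i=\omega\})$. Given $r=(d',\pi',F')\le\lim\bar q$ and $N<\omega$, every new prediction point $n\in(d')^{-1}(\{1\})\setminus d^{-1}(\{1\})$ satisfies $n\geq|d^*|>k_i$ for all divergent $i$. Pick $m\geq N$ in the $\mathcal U$-large set where $f^m_i\on|d'|=f_i\on|d'|$ for the total $i$, and where, for the divergent $i$, $f^m_i(k_i)$ avoids the finite set $\{t(k_i):t\in\dom(\pi'_n),\ n\text{ a new prediction point}\}$. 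This is the set to avoid, not a bound on the outputs of $\pi'$. What you use is that every finite set is avoided on a set in $\mathcal U$, not that the values are pairwise distinct. Then for each new $n$, $\pi''_n\coloneqq\pi'_n\cup\{(f^m_i\on n,f^m_i(n)):i<l\}$ is a function:
\begin{itemize}
\item the added nodes are pairwise distinct, since $n\geq|d|$ and the $s_i$ are distinct;
\item for total $i$ they agree with $\pi'_n$, because $f^m_i\on(n+1)=f_i\on(n+1)$ and $r\le\lim\bar q$ gives $\pi'_n(f_i\on n)=f_i(n)$;
\item for divergent $i$ they lie outside $\dom(\pi'_n)$.
\end{itemize}
Padding $d'$ with zeros until $F'\cup F_m$ is separated then yields a common extension of $r$ and $q_m$, because padding creates no new prediction points. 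Hence $\lim\bar q\Vdash|\dot W(\bar q)|=\omega$. This is exactly why conditions carry $d\in 2^{<\omega}$ rather than just a finite set of prediction points: the limit can commit to having no prediction points in $[|d|,|d^*|)$.
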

	
	%	We will use Fr-linkedness to control the values of $\non(M_{\finfin})$ and $\cov(M_{\finfin})$, not by goodness properties but by directly using Fr-limits (\Cref{lem:closed_Fr_limits_keep_nonMfinfin_small}). To this end, we formulate a finite support iteration of ${<}\kappa$-Fr-linked forcings. The formalization is based on \cite{Mej19}, \cite{CGHY24}.
	
	We formulate a finite support iteration (fsi) of $<\kappa$-Fr-linked forcings:
	\begin{dfn}
		\label{dfn:Gamma_iteration}
		Let $\kappa$ be an uncountable regular cardinal.
		\begin{itemize}
			\item 
			A finite support iteration $\p_\gamma=\langle(\p_\xi,\qd_\xi):\xi<\gamma\rangle $ of ccc forcings is a \emph{${<}\kappa$-Fr-iteration with witnesses $\langle\theta_\xi:\xi<\gamma\rangle$ and $\langle\dot{Q}_{\xi,\zeta}:\zeta<\theta_\xi,\xi<\gamma\rangle$} if for any $\xi<\gamma$, $\theta_\xi$ is a cardinal $<\kappa$ and
			$\langle\dot{Q}_{\xi,\zeta}:\zeta<\theta_\xi\rangle$ are $\p_\xi$-names satisfying: 
			\[
			\Vdash_{\p_\xi}\dot{Q}_{\xi,\zeta}\subseteq\qd_\xi\text{ is Fr-linked for }\zeta<\theta_\xi\text{ and }\bigcup_{\zeta<\theta_\xi}\dot{Q}_{\xi,\zeta}=\qd_\xi.
			\]
			\item A condition $p\in\p_\gamma$ is \emph{determined} if for each $\xi\in\dom(p)$, there is $\zeta_\xi<\theta_\xi$ such that  $\Vdash_{\p_\xi} p(\xi)\in \dot{Q}_{\xi,\zeta_\xi}$. Note that there are densely many determined conditions (proved by induction on $\gamma$).
			
		\end{itemize}
	\end{dfn}
	
	Fr-limits for conditions of the iteration $\p_\gamma$ are defined for ``refined'' sequences:
	
	\begin{dfn}
		\label{dfn:uniform_delta_system}
		Let $\p_\gamma$ be a ${<}\kappa$-Fr-iteration and $\delta$ be an ordinal.
		We say that $\bar{p}=\langle p_m: m<\delta\rangle\in(\p_\gamma)^\delta$ is \emph{a uniform $\Delta$-system} if:
		\begin{enumerate}
			\item Each $p_m$ is determined, witnessed by $\langle\zeta^m_{\xi} :\xi\in\dom(p_m)\rangle$,
			\item the family $\{\dom(p_m): m<\delta\}$ is a $\Delta$-system with root $\nabla$,
			\item there is a sequence $\langle\zeta_\xi^{*} :\xi\in\nabla\rangle$ such that for $\xi\in\nabla$, $\zeta_\xi^{*}=\zeta^m_\xi$ for all $m<\delta$, i.e., all $p_m(\xi)$ are forced to be in a common Fr-linked component for $\xi\in\nabla$,
			\item all $\dom(p_ m)$ have $n^\prime$ elements, and $\dom(p_ m)=\{\a_{n, m}:n<n^\prime\}$ is the increasing enumeration,
			\item there is $r^\prime\subseteq n^\prime$ such that $n\in r^\prime\Leftrightarrow\a_{n, m}\in\nabla$ for $n<n^\prime$,
			\item for $n\in n^\prime\setminus r^\prime$, $\langle\a_{n, m}: m<\delta \rangle$ is (strictly) increasing.
		\end{enumerate}
	\end{dfn}
	
	\begin{dfn} \label{dfn:of_lim_ite}
		Let $\p_\gamma$ be a ${<}\kappa$-Fr-iteration and $\bar{p}=\langle p_m: m<\omega\rangle\in(\p_\gamma)^\omega$ be a uniform $\Delta$-system with root $\nabla$. We (inductively) define $p^\infty=\lim\bar{p}$ as follows:
		\begin{enumerate}
			\item $\dom(p^\infty)\coloneq\nabla$,
			\item $p^\infty\on\xi\Vdash_{\p_\xi} p^\infty(\xi)\coloneq\lim\langle p_m(\xi):m\in\dot{W}(\bar{p}\on\xi)\rangle$ for $\xi\in\nabla$, where $\bar{p}\on\xi\coloneq\langle p_m\on\xi:m<\omega\rangle\in(\p_\xi)^\omega$. \label{eq_lim_second}
		\end{enumerate}
	\end{dfn}
	
	To see that the second item is valid, $\dot{W}(\bar{p}\on\xi)$ has to be infinite, which is true: 
	
	\begin{lem}[{\cite{Mej19}, \cite[Lemma 3.6]{CGHY26}}]\label{lem:Fr-limit_principle}
		Let $\p_\gamma$ be a ${<}\kappa$-Fr-iteration. Let $\bar{p}=\langle p_m: m<\omega\rangle\in(\p_\gamma)^\omega$ be a uniform $\Delta$-system and $p^\infty\coloneq\lim\bar{p}$.
		Then
		$p^\infty\Vdash_{\p_\gamma}\lvert\dot{W}(\bar{p})\rvert=\omega$.
	\end{lem}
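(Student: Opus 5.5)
We prove the Fr-limit principle (Lemma~\ref{lem:Fr-limit_principle}) by induction on $\gamma$, and we prove a stronger, more uniform statement that survives the induction: for every $\xi\le\gamma$, every $q\le p^\infty\on\xi$ in $\p_\xi$, and every $N<\omega$, there are $m\ge N$ and $r\le q$ with $r\le p_m\on\xi$. This says $p^\infty\on\xi\Vdash_{\p_\xi}|\dot W(\bar p\on\xi)|=\omega$. The case $\xi=\gamma$ is the lemma, since $p^\infty\on\gamma=p^\infty$ and $\bar p\on\gamma=\bar p$. We handle the base case, the successor step, and the limit step in turn; the first two parts below also justify simultaneously that Definition~\ref{dfn:of_lim_ite}(\ref{eq_lim_second}) makes sense at each stage.

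\emph{Base and successor steps.} The base case $\xi=0$ is trivial. For the successor step $\xi\to\xi+1$, fix $q\le p^\infty\on(\xi+1)$ and $N$; there are three cases.
\begin{itemize}
\item If $\xi\notin\bigcup_m\dom(p_m)$, the step is immediate from the induction hypothesis.
\item If $\xi\in\dom(p_m)\setminus\nabla$ for some $m$, then by the uniform $\Delta$-system clauses (4)--(6), $\xi$ belongs to at most one $\dom(p_m)$. Raising $N$ past that $m$, we have $p_m(\xi)$ trivial for $m\ge N$, so again the induction hypothesis applies.
\item If $\xi\in\nabla$, work in $V^{\p_\xi}$ below $q\on\xi$. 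The induction hypothesis gives that $\dot W(\bar p\on\xi)$ is infinite, so $p^\infty(\xi)=\lim\langle p_m(\xi):m\in \dot W(\bar p\on\xi)\rangle$ is defined via the Fr-linked component $\dot Q_{\xi,\zeta^*_\xi}$. By clause (3), all $p_m(\xi)$ lie in this one component.
\end{itemize}
In the last case, the Fr-linkedness of $\dot Q_{\xi,\zeta^*_\xi}$ gives
\[
p^\infty(\xi)\Vdash_{\qd_\xi}\{m\in\dot W(\bar p\on\xi):p_m(\xi)\in\dot G_\xi\}\text{ infinite}.
\]
Since $q(\xi)\le p^\infty(\xi)$, we can extend $q\on\xi$ to some $r_0$ and find $s\le q(\xi)$, $m\ge N$ with $r_0\Vdash m\in\dot W(\bar p\on\xi)$ and $r_0\Vdash s\le p_m(\xi)$. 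Then $r_0\le p_m\on\xi$, and $r\coloneq r_0{}^\frown s$ works. Formally, we pick a $\p_\xi$-name for a pair $(m,s)$ and decide $m$ by strengthening, using that $\Vdash$ ``$\exists m\ge N$'' below $q\on\xi$.

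\emph{Limit step.} Let $\xi$ be a limit with $q\le p^\infty\on\xi$ and $N$ given. Since supports are finite, $q\in\p_\eta$ for some $\eta<\xi$, and $\nabla\cap\xi\subseteq\eta$ after enlarging $\eta$. By clause (6), the non-root coordinates $\a_{n,m}$ are strictly increasing in $m$. Hence for all but finitely many $m$, $\dom(p_m)\cap[\eta,\xi)$ consists of non-root points; the problem is that these might be infinitely many different points below $\xi$.

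The real concern is this limit step, where the non-root parts of $p_m$ can accumulate in $[\eta,\xi)$. The natural fix is to choose $\eta$ carefully. For each $n\notin r'$, either $\sup_m\a_{n,m}\ge\xi$ or the values stay below some $\eta_n<\xi$. For those $n$ with $\a_{n,m}$ cofinal in (or beyond) $\xi$, the strict increase means that for each $\eta$ only finitely many $m$ have $\a_{n,m}<\eta$. We argue instead as follows. Apply the induction hypothesis at $\eta$ to get $r'\le q$ and $m\ge N$ with $r'\le p_m\on\eta$. Then set $r\coloneq r'\cup p_m\on[\eta,\xi)$. This is a legitimate condition because $\dom(r')\subseteq\eta$ and $p_m\on[\eta,\xi)$ consists of coordinates where $q$ and $p^\infty$ impose nothing, so $r\le q$ and $r\le p_m\on\xi$. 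Thus no accumulation issue actually arises: we only need $q$'s support to be bounded below $\xi$, and concatenating with the tail of $p_m$ is always possible in a finite support iteration. The remaining care is checking that $r'\cup p_m\on[\eta,\xi)$ is a condition, which holds since $p_m\on[\eta,\xi)$ is a $\p_\eta$-name sequence independent of $r'$.
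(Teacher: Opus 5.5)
The paper only cites this lemma, but your argument is correct and is the standard proof behind it: induction on $\xi\le\gamma$, with the $\Delta$-system property handling non-root coordinates (this uses clause (2), not (4)--(6)), Fr-linkedness of the common component $\dot Q_{\xi,\zeta^*_\xi}$ handling $\xi\in\nabla$, and finite supports handling limits. Two small repairs are needed: $r_0\Vdash m\in\dot W(\bar p\on\xi)$ only gives $r_0\parallel p_m\on\xi$, so pass to a common extension before appending $\dot s$; and in the limit step, $r'\cup p_m\on[\eta,\xi)$ is a condition below both $q$ and $p_m\on\xi$ simply because $r'\le p_m\on\eta$ and $\dom(q)\subseteq\eta$, so the discussion of accumulation can be dropped.
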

	To show that Fr-linkedness helps to keep $\ee_\zero$ small, we characterize $\ee_\zero$ with the following relational system:  
	\begin{dfn}
		$\Pred_\zero\coloneq\{\pi\in \Pred: \pi \text{ does not predict any } f\in\zero\}$.
		$\mathbf{PR}^\prime_\zero\coloneq\langle \oo,\Pred_\zero,\zp \rangle$.%$\ee_\zero\coloneq\bb(\mathbf{PR}_\zero)$.
	\end{dfn}
	\begin{lem}
		\label{lem_b_of_przeroprime}
		$\bb(\mathbf{PR}^\prime_\zero)=\ee_\zero$.
	\end{lem}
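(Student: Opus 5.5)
We prove the two inequalities $\bb(\mathbf{PR}^\prime_\zero)\le\ee_\zero$ and $\ee_\zero\le\bb(\mathbf{PR}^\prime_\zero)$. Both relational systems have the same set of challenges $\oo$ and the same relation $\zp$. The only difference is that the responses of $\mathbf{PR}^\prime_\zero$ range over the subset $\Pred_\zero\subseteq\Pred$.

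For $\ee_\zero\le\bb(\mathbf{PR}^\prime_\zero)$, restricting the set of responses can only make families easier to be unbounded. Suppose $F\subseteq\oo$ is $\mathbf{PR}_\zero$-bounded, witnessed by some $\pi\in\Pred$ with $f\zp\pi$ for all $f\in F$. Then we must find such a witness inside $\Pred_\zero$, which is the nontrivial direction. So it is actually easier to compare directly.

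\emph{First inequality.} If $F$ is $\mathbf{PR}_\zero$-unbounded, then no $\pi\in\Pred$ meets all of $F$. In particular no $\pi\in\Pred_\zero$ does, so $F$ is $\mathbf{PR}^\prime_\zero$-unbounded. This gives $\bb(\mathbf{PR}^\prime_\zero)\le\ee_\zero$. (Non-triviality of $\mathbf{PR}^\prime_\zero$ will follow from the second part.)

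\emph{Second inequality.} Let $F\subseteq\oo$ with $|F|<\ee_\zero$, and let $\pi=(D,\langle\pi_n:n\in D\rangle)\in\Pred$ satisfy $f\zp\pi$ for all $f\in F$. We modify $\pi$ into some $\pi^\ast\in\Pred_\zero$ that still $\zero$-predicts every $f\in F$. Write $\sigma\in\omega^n$ and call $\sigma$ \emph{$j$-tail-zero} if $\sigma(i)=0$ for all $i$ with $j\le i<n$. The idea follows \Cref{lem:ezero_min_eb}: on inputs that look eventually zero we output $1$, and otherwise we keep $\pi$. We cannot use an interval partition $J$, because we do not have $|F|<\bb$. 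Instead we use the fact that $\pi$ already $\zero$-predicts every $f\in F\cap\zero$, i.e.\ it fails to predict them, and we only need to kill prediction of eventually zero functions outside $F$.

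We define $\pi^\ast_n(\sigma)\coloneq 1$ if $\sigma$ is $0$-tail-zero beyond the last nonzero coordinate in a way that $\pi$ would predict $0$. Concretely: let $j_\sigma$ be one plus the last nonzero position of $\sigma$, set $s=\sigma\on j_\sigma$, and let $z_s=s^\frown\langle0,0,\dots\rangle\in\zero$. If $\pi$ predicts $z_s$ and $n$ is beyond the last $n'\in D$ with $\pi_{n'}(z_s\on n')\neq0$, put $\pi^\ast_n(\sigma)\coloneq1$. Otherwise $\pi^\ast_n(\sigma)\coloneq\pi_n(\sigma)$, with domain $D$.

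Then every $z\in\zero$ is not predicted by $\pi^\ast$. If $\pi$ does not predict $z$, the values are unchanged at almost all relevant $n$, since the first case requires $\pi$ to predict $z_s=z$. If $\pi$ predicts $z$, then $\pi^\ast$ outputs $1\ne0$ cofinitely often on $D$. So $\pi^\ast\in\Pred_\zero$.

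For $f\in F\setminus\zero$: any $\sigma=f\on n$ has $z_s$ with $s$ an initial segment of $f$ ending at a nonzero value. The next nonzero value of $f$ occurs at some later position $p$. At stages $n\in D$ with $n>p$, the string $f\on n$ has a different $s$, so the modification at $f\on n$ concerns a longer $s$. The main obstacle is that for each $n$ the modification may still fire, namely when $f$ has a long zero block just before $n$. In that case $f(n)$ may be nonzero, and $\pi_n(f\on n)=0$ could equally fail. I would handle this by firing only when $\pi_n(\sigma)=0$, replacing $0$ by $1$, so that at firing stages $\pi^\ast$ predicts $f(n)$ at least as often as $\pi$ except when $f(n)=0$ and $\pi$ predicted correctly. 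To control those stages, I would expect to need that the modification happens at only finitely many $n$ along $f$. That would require choosing a sparser threshold (e.g.\ firing only after $z_s$ has been correctly predicted at $|s|$-many points of $D$). Checking that this bookkeeping works, or otherwise falling back on a density argument, is the crux.
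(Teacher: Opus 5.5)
Your first inequality $\bb(\mathbf{PR}^\prime_\zero)\le\ee_\zero$ is correct, and it is exactly the direction the paper calls clear. The second inequality is not established. You leave its crux open yourself, and the route you set up cannot be completed.

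You fix an arbitrary witness $\pi$ for $F$ and then transform it by a rule that looks only at $\pi$. Take $D$ to be the even numbers and $\pi_d\equiv0$ for $d\in D$. Then every $z_s$ is predicted and $\pi_{n^\prime}(z_s\on n^\prime)=0$ for all $n^\prime\in D$. So your rule fires at every stage, giving $\pi^\ast_d\equiv1$. Hence $\pi^\ast$ predicts none of the functions vanishing on almost all even numbers, although $\pi$ predicts all of them.

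No threshold refinement can help, because for this $\pi$ \emph{no} $\pi^\ast\in\Pred_\zero$ predicts every $f\notin\zero$ that $\pi$ predicts. Given $\pi^\ast$, build $f$ as follows: repeatedly extend the current finite part by zeros until $\pi^\ast$ makes an error, then put a $1$ at the next odd position. The first step is always possible, since $\pi^\ast$ does not predict the eventually-zero extension. The resulting $f$ vanishes on the evens, is not in $\zero$, and is not predicted by $\pi^\ast$. So with $F=\{f\}$ and this $\pi$ as the given witness, every modification of $\pi$ that ignores $F$ fails. A modification that does use $F$, like the interval-partition trick of Lemma~\ref{lem:ezero_min_eb}, needs $|F|<\bb$. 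As you correctly note, that is unavailable; whether $\ee_\zero\le\bb$ is left open in the paper.

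The missing idea is to choose the witness for a larger family instead of repairing a given one. Since $\zero$ is countable and $\ee_\zero\ge\min\{\ee,\bb\}\ge\aleph_1$ by Lemma~\ref{lem:ezero_min_eb}, the family $F\cup\zero$ still has size $<\ee_\zero$. So some $\pi\in\Pred$ $\zero$-predicts every member of $F\cup\zero$. For $z\in\zero$, $z\zp\pi$ means precisely that $\pi$ does not predict $z$, so $\pi\in\Pred_\zero$ automatically, and it meets every $f\in F$. This is the paper's proof, and it makes your modification step unnecessary.
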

	\begin{proof}
		$\bb(\mathbf{PR}^\prime_\zero)\leq\ee_\zero$ is clear. To see $\ee_\zero\leq \bb(\mathbf{PR}^\prime_\zero)$, let $F\subseteq\oo$ of size $<\ee_\zero$.
		By Lemma \ref{lem:ezero_min_eb}, we particularly know that $\ee_\zero$ is uncountable. Thus $F^\prime\coloneq F\cup\zero$ has size $<\ee_\zero$, so some $\pi\in \Pred$ $\zero$-predicts all $f\in F^\prime$. This $\pi$ has to be in $\Pred_\zero$, so $\pi$ witnesses that $F$ is $\mathbf{PR}^\prime_\zero$-bounded.
	\end{proof}
	
	We use the following notation.
	\begin{dfn}
		For a predictor $\pi=(D,\{\pi_n:n\in D\})$, $f\in\oo$ and $n<\omega$, $f\sqsubset^\mathrm{p}_n\pi$ denotes that $f(k)=\pi_k(f\on k)$ for all $k\geq n$ in $D$. 
	\end{dfn}

	%\begin{lem}
	%	\label{lem_ez_small}
	%	Let $\p_\gamma$ be a $<\kappa$-$\Lambda_\mathrm{Fr}$-iteration.
	%	Then, $\Vdash_{\p_\gamma} C_{[\gamma]^{<\kappa}}\lq\mathbf{PR}^\prime_\zero$, particularly $\ee_\zero\leq\kappa$, witnessed by the first $\kappa$-many Cohen reals.
	%\end{lem}
	
	The Fr-linkedness property and Cohen reals help to keep $\ee_\zero$ small through the forcing iteration: 
	\begin{lem}\label{lem_ez_small}
		Let $\kappa$ be an uncountable regular cardinal, $\gamma>\kappa$ be a limit ordinal and $\p_\gamma$ be a $<\kappa$-Fr-iteration whose first $\kappa$-many iterands are Cohen forcings $\mathbb{C}$.
		Then, $\Vdash_{\p_\gamma}\ee_\zero\leq\kappa$.
	\end{lem}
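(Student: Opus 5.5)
Let $c_\alpha$ ($\alpha<\kappa$) denote the Cohen reals added at the first $\kappa$ stages, viewed as elements of $\oo$. I will show that $\{c_\alpha:\alpha<\kappa\}$ is forced to be $\mathbf{PR}^\prime_\zero$-unbounded; by Lemma \ref{lem_b_of_przeroprime} this gives $\Vdash\ee_\zero\leq\kappa$. So suppose toward a contradiction that $p\Vdash\dot\pi\in\Pred_\zero$ and $\dot\pi$ $\zero$-predicts every $c_\alpha$. Each $c_\alpha$ is forced to be outside $\zero$, so $\dot\pi$ genuinely predicts it. For each $\alpha<\kappa$, pick $p_\alpha\leq p$ and $n_\alpha<\omega$ with $p_\alpha\Vdash c_\alpha\sqsubset^\mathrm{p}_{n_\alpha}\dot\pi$. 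We may take $p_\alpha$ determined and with $\alpha\in\dom(p_\alpha)$, so that $p_\alpha(\alpha)$ is a Cohen condition $s_\alpha\in\fsq$.

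Since $\kappa$ is regular uncountable and each $\theta_\xi<\kappa$, we thin out to $\kappa$ many $\alpha$ on which $n_\alpha=n$, $s_\alpha=s$, and the other finite data agree. For this we apply the $\Delta$-system lemma to the domains and use that there are fewer than $\kappa$ choices for the component indices on the root. The root indices range over a product of finitely many $\theta_\xi<\kappa$, so regularity of $\kappa$ lets us fix them. We also arrange that the position of $\alpha$ in $\dom(p_\alpha)$ is fixed and outside the root, with the $\alpha$'s increasing. Then we choose a countable increasing subsequence $\langle p_m:m<\omega\rangle$ forming a uniform $\Delta$-system in the sense of Definition \ref{dfn:uniform_delta_system}, and let $p^\infty=\lim\bar p$. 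By Lemma \ref{lem:Fr-limit_principle}, $p^\infty\Vdash|\dot W(\bar p)|=\omega$, and $p^\infty\leq p$ holds provided the root part of $p$ is respected. We may arrange this by taking $p_\alpha\leq p$ and putting $\dom(p)$ into the root.

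Now let $z:=s^\frown 0^\omega\in\zero$, a ground-model function. For $m\in\dot W(\bar p)$ we have $c_{\alpha_m}\supseteq s$ and $c_{\alpha_m}\sqsubset^\mathrm{p}_n\dot\pi$. The key claim is that $p^\infty$ forces $z\sqsubset^\mathrm{p}_{n'}\dot\pi$ for $n'=\max(n,|s|)$. This contradicts $\dot\pi\in\Pred_\zero$. To prove the claim, fix $k\geq n'$ and take $q\leq p^\infty$ deciding $k\in\dot D$ and $\dot\pi_k(z\on k)$. Since $\dot W$ is forced infinite, some $q'\leq q$ and some $m$ with $\alpha_m\notin\dom(q')$ satisfy $q'\Vdash p_m\in\dot G$. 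Here we use that the coordinates $\alpha_m$ are increasing and $q'$ has finite support, so infinitely many $m$ remain available. We then further strengthen the Cohen coordinate $\alpha_m$ of $q'$ to $s^\frown0^{k+1-|s|}$. This is compatible because $p_m(\alpha_m)=s$. The resulting condition forces $c_{\alpha_m}\on(k+1)=z\on(k+1)$ and $c_{\alpha_m}\sqsubset^\mathrm{p}_n\dot\pi$, hence $\dot\pi_k(z\on k)=z(k)$ whenever $k\in\dot D$. Since $q$ already decided these values, $q$ itself forces them, which proves the claim.

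The main obstacle is the step that strengthens $q'$ at coordinate $\alpha_m$ while keeping $q'\Vdash p_m\in\dot G$. We need $q'$ to force $p_m\in\dot G$ without having already committed the Cohen real $c_{\alpha_m}$ beyond $s$ in an incompatible way. To handle this, I would first extend $q'$ to lie below $p_m$, and choose $m$ large enough that $\alpha_m\notin\dom(q)$. Then $q'(\alpha_m)$ is just $p_m(\alpha_m)=s$ together with whatever the $\dot W$-witnessing added. So I would pick $q'$ by minimal extension: take $q\cup p_m$ suitably combined, which is a condition because $p_m\in\dot G$ is forced compatible with $q$ and the coordinate $\alpha_m$ is new for $q$. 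Checking this compatibility via the iteration structure, where coordinates above $\alpha_m$ may refer to $c_{\alpha_m}$, is where care is needed.
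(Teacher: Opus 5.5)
Your framework matches the paper's: the first $\kappa$ Cohen reals, the reduction to $\mathbf{PR}^\prime_\zero$, thinning to a uniform $\Delta$-system with common $n$ and $s$, a Fr-limit, and a contradiction via $z=s^\frown 0^\omega\in\zero$. However, the key claim $p^\infty\Vdash z\pdtd_{n'}\dot\pi$ is not established. The step you flag as the main obstacle is not a matter of care; it can genuinely fail. From $q\le p^\infty$ and $\alpha_m\notin\dom(q)$ you cannot in general get a common extension of $q$ and $p_m$ whose $\alpha_m$-th coordinate is still compatible with $s^\frown\zero_{k+1-|s|}$. For $\xi>\alpha_m$ the names $p_m(\xi)$ may depend on $c_{\alpha_m}$. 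For $\xi\in\nabla$, the limit $p^\infty(\xi)$ is computed along $\dot W(\bar p\on\xi)$, which also depends on these Cohen reals. Resetting the $\alpha_m$-th coordinate of a common extension back to $s$ can therefore break the order relations at later coordinates.

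For example, suppose the iteration has, at some stage $\xi$ above all $\alpha_m$, a Fr-linked component $\{r_0,r_1\}$ with $r_0\perp r_1$ whose limit function returns $r_1$ whenever $r_1$ occurs infinitely often. Take $\nabla=\{\xi\}$, $\dom(p_m)=\{\alpha_m,\xi\}$, and let $p_m(\xi)$ name $r_0$ if $c_{\alpha_m}(|s|)=0$ and $r_1$ otherwise. Then $p^\infty$ forces $c_{\alpha_m}(|s|)\neq 0$ for \emph{every} $m\in\dot W(\bar p)$. So no $q'\le p^\infty$ lying below some $p_m$ can force $c_{\alpha_m}\supseteq s^\frown 0$, and ``$q\cup p_m$ suitably combined'' does not exist. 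Since your $p_\alpha$ are arbitrary witnesses in an arbitrary ${<}\kappa$-Fr-iteration, nothing in your argument excludes this.

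The missing idea is to do the strengthening at $\alpha_m$ \emph{before} taking the limit. Since $\alpha_m\notin\nabla$, let $q_m\le p_{\alpha_m}$ be obtained by changing only the $\alpha_m$-th coordinate to $s^\frown\zero_m$, with $|s|=n^*$. Then $\bar q=\langle q_m:m<\omega\rangle$ is still a uniform $\Delta$-system, because non-root coordinates carry no uniformity requirement. Each $q_m$ forces $\dot\pi_j(s^\frown\zero_{j-n^*})=0$ for all $j\in[n^*,n^*+m)\cap\dot D$. The limit $q^\infty=\lim\bar q$ forces infinitely many $q_m\in\dot G$. Choosing such an $m>k-n^*$ handles every $k\ge n^*$ at once, so $q^\infty\Vdash z\pdtd_{n^*}\dot\pi$ with no density argument below the limit. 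This also resolves your worry about $p^\infty\le p$. Each $q_m\le p$ and $q^\infty$ forces some $q_m\in\dot G$, hence $q^\infty\Vdash p\in\dot G$. Therefore $q^\infty\Vdash\dot\pi\in\Pred_\zero$, which contradicts $\dot\pi$ predicting $z\in\zero$.
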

	
	\begin{proof}
		We show that the first $\kappa$-many Cohen reals $\langle\dot{c}_\a:\a<\kappa\rangle$ (as members of  $\oo$) witness $\Vdash_{\p_\gamma} \ee_\zero\leq\kappa$, by focusing on the characterization $\ee_\zero=\bb(\mathbf{PR}^\prime_\zero)$ in Lemma \ref{lem_b_of_przeroprime}. 
		
		Assume towards contradiction that there exist a condition $p\in\p_\gamma$ and a $\p_\gamma$-name  $\dot{\pi}=(\dot{D},\langle\dot{\pi}_n:n\in\dot{D}\rangle)$ of a member of $\Pred_\zero$ such that for all $\a<\kappa$,
		$p\Vdash \dot{c}_\a\zp \dot{\pi}$.
		Since Cohen reals are not in $\zero$, we actually have $p\Vdash \dot{c}_\a\pdtd\dot{\pi}$.
		Let us witness the starting point of the prediction $\pdtd=\bigcup_{n<\omega}\pdtd_n $.
		That is,
		for each $\a<\kappa$ we pick $p_\a\leq p$ and $n_\a<\omega$ such that $p_\a\Vdash \dot{c}_{\a}\pdtd_{n_\a}\dot{\pi}$.
		By extending and thinning, we may assume there is $I\in[\kappa]^{\kappa}$ such that: 
		
		\begin{enumerate}
			\item $\a\in\dom(p_\a)$  for $\a\in I$. %(By extending $p_i$.)
			%\item All $ p_i$ follow a common guardrail $h\in H$. ($|H|<\kappa$.)
			\item $\{p_\a:\a\in I\}$ forms a uniform $\Delta$-system with root $\nabla $. %(By Lemma \ref{lem_uniform_Delta_System_Lemma}.)
			%\item $\b_i\notin\nabla$, hence all $\b_i$ are distinct. %($\b_i$ are unbounded in $\kappa$, so $\b_i$ are eventually out of the finite set $\nabla$.)
			\item All $n_\a$ are equal to $ n^*$.
			\item All $p_\a(\a)$ are the same Cohen condition $s\in\omega^{<\omega}$. 
			\item $|s|=n^*$. (By extending $s$ or increasing $n^*$.)
		\end{enumerate}
		
		In particular, we have that:
		\begin{equation}
			\label{eq_prop_of_refined_pi}
			\text{For each }\a\in I, ~p_\a\text{ forces }\dot{c}_{\a}\on n^*=s\text{ and }\dot{c}_{\a}\pdtd_{n^*}\dot{\pi}.
		\end{equation}
		
		Pick some countable $\{\a_0<\a_1<\cdots\}\in[I\setminus\nabla]^\omega$. For $m<\omega$, define $q_m\leq p_{\a_m}$ by extending the $\a_m$-th position to $q_m(\a_m)\coloneqq s^\frown\zero_m\in \omega^{n^*+m}$ where $\zero_m$ denotes the sequence of length $m$ whose values are all $0$.
		
		By \eqref{eq_prop_of_refined_pi}, we obtain:
		\begin{equation}
			\label{eq_prop_of_qi}
			q_m\Vdash\text{for all  }n\in \left[n^*,n^*+m\right)\cap\dot{D},~\dot{\pi}_n(s^\frown\zero_{n-n^*})=0.
		\end{equation}
		Since $\a_m\notin\nabla$, $\bar{q}\coloneq\langle q_m\rangle_{m<\omega}$ also forms a $\Delta$-system (with root $\nabla$) and hence we can take the limit $q^\infty\coloneq\lim\bar{q}$. Let $f\coloneq s^\frown00\cdots\in\zero$.
		Since $q^\infty\Vdash\exists^\infty m<\omega~q_m\in\dot{G}$ and by \eqref{eq_prop_of_qi},
		\begin{equation*}
			%\label{eq_prop_of_qi}
			q^\infty\Vdash\text{for all  }n\geq n^*\text{ in }\dot{D},~\dot{\pi}_n(f\on n)=0=f(n),
		\end{equation*}
		which contradicts $\dot{\pi}\in \Pred_\zero$.	
	\end{proof}
	
	Now we are ready to prove the consistency of $\ee_\zero<\ee$.
	\begin{thm}
		\label{thm:Con_eezero<ee}
		Let $\kappa\leq\lambda$ be uncountable regular cardinals. Then, it is consistent that $\kappa=\ee_\zero\leq\ee=\lambda$ holds.
	\end{thm}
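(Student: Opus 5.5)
We plan to build a finite support iteration of length $\gamma\coloneq\kappa+\lambda$ (ordinal sum, a limit ordinal $>\kappa$; if needed take $\gamma=\lambda\cdot\lambda$ or $\kappa+\lambda\cdot\lambda$ for bookkeeping) over a ground model of GCH (or at least $\lambda^{<\kappa}=\lambda$ and $2^{\aleph_0}\le\lambda$). The first $\kappa$ iterands are Cohen forcing $\mathbb{C}$; afterwards we iterate the prediction forcing $\prp$, evaluated in the intermediate extension, using a bookkeeping argument so that for every $\p_\xi$-name of a set $F\subseteq\oo$ of size $<\lambda$ some later stage forces with the restriction of $\prp$ to conditions whose $F$-part lies in that set (equivalently, since any $<\lambda$ reals appear at an initial segment of cofinality $<\lambda$, it is enough to force with full $\prp$ of the intermediate model at cofinally many stages). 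Each $\prp$ is $\sigma$-Fr-linked by Lemma \ref{lem_prp_is_Frlinked} (applied in the intermediate model, which is a ZFC model, so the statement holds there) and $\mathbb{C}$ is $\sigma$-Fr-linked by Lemma \ref{lem:poset_is_its_size_Frlinked}. Hence the iteration is a ${<}\kappa$-Fr-iteration with all witnesses $\theta_\xi=\aleph_0$ (names $\dot{Q}_{\xi,\zeta}$ picked by the maximum principle), and all iterands are ccc, so cardinals are preserved and $2^{\aleph_0}=\lambda$ in the final model.

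Upper bound on $\ee_\zero$: by Lemma \ref{lem_ez_small} directly, $\Vdash_{\p_\gamma}\ee_\zero\le\kappa$. Lower bound on $\ee$: if $F\subseteq\oo$ in the final model has $|F|<\lambda$, then since $\lambda$ is regular and the iteration has finite supports with ccc iterands, $F$ lies in $V^{\p_\xi}$ for some $\xi<\gamma$ (with $\gamma$ of cofinality $\lambda$, which we ensure by choosing $\gamma=\kappa+\lambda$ when $\kappa<\lambda$, or note the length has cofinality $\lambda$). Some later stage is $\prp$, whose generic predictor predicts every ground-model real by the standard density argument (the same argument referred to in the proof that $\prp_\zero$ forces $f\zp\dot\pi_G$ for $f\notin\zero$), so $F$ is predicted and $\ee\ge\lambda$. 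Also $\ee\le 2^{\aleph_0}=\lambda$, so $\ee=\lambda$.

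Lower bound $\ee_\zero\ge\kappa$: use Lemma \ref{lem:ezero_min_eb}, $\ee_\zero\ge\min\{\ee,\bb\}$. We have $\ee=\lambda\ge\kappa$; for $\bb$, the cofinally many $\prp$ iterands together with Cohen reals should give $\bb\ge\lambda$: indeed $\ee\le\bb$ fails in general, but $\prp$ adds a predictor predicting all old reals, and a predictor predicting a family of functions (applied to codes of functions as in Blass) yields a dominating real over the intermediate model; alternatively we may simply interleave Hechler forcing, which is $\sigma$-centered and (by Mej\'{\i}a) $\sigma$-Fr-linked, at cofinally many stages. I would take the latter route to be safe, so $\bb=\lambda$ and hence $\ee_\zero\ge\min\{\lambda,\lambda\}\ge\kappa$, giving $\ee_\zero=\kappa$. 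When $\kappa=\lambda$ everything collapses to $\ee_\zero=\ee=\lambda$, still consistent with the statement.

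The main obstacle is ensuring the iteration genuinely satisfies the hypotheses of Lemma \ref{lem_ez_small}: the Fr-linked decompositions must exist as names in intermediate models (fine, since Lemma \ref{lem_prp_is_Frlinked} and the Hechler analogue are ZFC theorems), and the cofinality/bookkeeping must be arranged so that $\gamma>\kappa$ is a limit of cofinality $\lambda$ and the reals of size $<\lambda$ are captured at initial stages.
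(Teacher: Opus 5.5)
\textbf{There is a genuine gap: the lower bound $\ee_\zero\ge\kappa$ cannot come from $\min\{\ee,\bb\}$.}

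First, Hechler forcing is not $\sigma$-Fr-linked. In fact no $\sigma$-Fr-linked poset adds a dominating real. Suppose $Q$ is Fr-linked and $\dot g$ names a real. Then for each $i$ there is $h_Q(i)$ such that no $p\in Q$ forces $\dot g(i)>h_Q(i)$. Otherwise the Fr-limit of conditions $p_k\in Q$ with $p_k\Vdash\dot g(i)>k$ would force $\dot g(i)>k$ for infinitely many $k$. A ground-model real eventually above the countably many $h_{Q_n}$ is then not dominated by $\dot g$. Concretely, Hechler conditions $(s,f_m)$ with $f_m(|s|)=m$ admit no Fr-limit. The same fact rules out your other suggestion: $\prp$ is $\sigma$-Fr-linked (Lemma \ref{lem_prp_is_Frlinked}), so it adds no dominating reals. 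With Hechler stages your iteration is not a ${<}\kappa$-Fr-iteration, and Lemma \ref{lem_ez_small} no longer applies.

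Worse, the route is self-defeating. By Lemma \ref{lem:ezero_min_eb}, $\ee=\bb=\lambda$ gives $\ee_\zero\ge\lambda$, so for $\kappa<\lambda$ you would need $\lambda\le\ee_\zero\le\kappa$. Every model of $\ee_\zero<\ee$ satisfies $\bb\le\ee_\zero$; in the paper's model, $\bb\le\kappa$ when $\kappa<\lambda$. Only the trivial case $\kappa=\lambda$ survives, via $\ee_\zero\le\ee\le\mathfrak{c}=\lambda$.

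\textbf{The missing idea is to raise $\ee_\zero$ directly, with iterands that are Fr-linked for trivial reasons.} At bookkept stages the paper forces with subforcings of $\prp_\zero$ of size ${<}\kappa$. Each is chosen suitably closed (cheap, since $\zero$ is countable) so that its generic predictor $\zero$-predicts a prescribed family of fewer than $\kappa$ reals from an intermediate model. This uses the density arguments from the lemma $\Vdash_{\prp_\zero} f\zp\dot{\pi}_G$. By Lemma \ref{lem:poset_is_its_size_Frlinked} these pieces are ${<}\kappa$-Fr-linked. Together with the Cohen and $\prp$ stages, the iteration stays a ${<}\kappa$-Fr-iteration, and Lemma \ref{lem_ez_small} still gives $\ee_\zero\le\kappa$. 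The full $\prp_\zero$ cannot serve as a $\sigma$-Fr-linked iterand, since iterating it cofinally would contradict Lemma \ref{lem_ez_small}; this is why only small pieces are used.

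Catching every ${<}\kappa$-sized set of reals requires bookkeeping arithmetic. The paper takes length $\mu$ with $\mu^{<\kappa}=\mu$ and $\cf(\mu)=\lambda$. It gets $\ee\ge\lambda$ from the cofinal $\prp$ stages, and $\ee\le\nonm\le\lambda$ from the cofinally added Cohen reals. Your GCH setting, with length of cofinality $\lambda$ and $\lambda^{<\kappa}=\lambda$, would also work, using $\ee\le\mathfrak{c}=\lambda$. The rest of your argument is fine once the Hechler stages are replaced this way.
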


	%\begin{proof}
	%	Iterate $\mathbb{PR}$, which is $\sigma$-Fr-linked and increases $\ee$.
	%\end{proof}
	
	\begin{proof}
		Let $\mu>\lambda$ be such that $\mu^{<\kappa}=\mu$ and $\cf(\mu)=\lambda$ (e.g., construct the continuous sequence $\langle \mu_\a:\a\leq\lambda\rangle$ by $\mu_0\coloneq\lambda$ and $\mu_{\a+1}\coloneq(\mu_\a^{<\kappa})^+$ and put $\mu\coloneq\mu_\lambda$).
		Let $\p=\p_\gamma$ be a $<\kappa$-Fr-iteration of length $\gamma=\mu$ whose first $\kappa$-many iterands are Cohen forcings and each of the rest is either $\prp$ or a subforcing of $\prp_\zero$ of size $<\kappa$ (by bookkeeping), which is possible by Lemma \ref{lem_prp_is_Frlinked} (and Lemma \ref{lem:poset_is_its_size_Frlinked}). 
		It is easy to see that $\p$ forces $\ee\geq\lambda$ by $\cf(\mu)=\lambda$ and $\ee_\zero\geq\kappa$ by bookkeeping (and $\mu^{<\kappa}=\mu$). 
		Since we are performing a finite support iteration, $\lambda$ many Cohen reals are cofinally added and they witness $\nonm\leq\lambda$. 
		Since $\ee\leq\nonm$ holds in ZFC, we have $\Vdash_\p\lambda\leq\ee\leq\nonm\leq\lambda$ and hence  $\Vdash_\p\ee=\lambda$.
		Lemma \ref{lem_ez_small} implies $\Vdash_\p \ee_\zero\leq\kappa$. Therefore, $\Vdash_{\p}\ee_\zero=\kappa\text{ and }\ee=\lambda$.
	\end{proof}
	
	%	\subsection{detailed analysis}
	
	%	\input{diff}
	%	\input{e2}
	\section{Questions}
	The first question is about the exact value of $\ee_\zero$. By \Cref{lem:ee_zero_ee}, \ref{lem:ezero_min_eb}, we know $\min\{\ee,\bb\}\leq\ee_\zero\leq\ee$. Then we ask:
	\begin{ques}
		Does $\ee_\zero=\min\{\ee,\bb\}$ hold in ZFC? Equivalently, is $\ee_\zero\leq\bb$ true?
	\end{ques} 
	
	The second question is more exploratory.
	The invariants $\sIstar$ and $\sIwstar$ are quite naturally defined: 
	%It is therefore natural to ask whether the same mechanism can be realized in a more natural setting.
	%The splitting* game and the splitting** game are reasonable ways to realize the combination of splitting and infinite games
	The splitting* game and the splitting** game are reasonable game-theoretic formulations of the splitting property. Moreover, the need to distinguish the two games is inevitable, since if both players play finite sets, there is no fair criterion for deciding the winner. 
	
	In contrast, our definition of $\zero$-prediction is somewhat artificial. This leads to the following question:
	\begin{ques}
		Is there another separation result based on the same mechanism as $\sIstar<\sIwstar$, but arising from
		a more natural framework than $\zero$-prediction?
	\end{ques}
	
	%\cite[Question E]{CGHY26}
	%Is there another separation result based on the same mechanism of $\sIstar<\sIwstar$, which seems more natural than $\ee_\zero<\ee$?
	\begin{acknowledgements}
		This article was written for the proceedings of the RIMS Set Theory Workshop 2025 \textit{Recent Developments in Axiomatic Set Theory}, held at Kyoto University RIMS. The author is grateful to the organizer Teruyuki Yorioka for letting him give a talk at the Workshop and submit an article to the proceedings.
		%The author also thanks his supervisor J\"{o}rg Brendle for his helpful comments. 
		This work was supported by JSPS KAKENHI Grant Number JP25KJ1818.
	\end{acknowledgements}

\end{document}